\documentclass[]{interact}

\usepackage{algorithm}%
\usepackage{algorithmicx}%
\usepackage{algpseudocode}%

\usepackage{epstopdf}
\usepackage[caption=false]{subfig}

\usepackage{enumerate}%
\usepackage{enumitem}
\usepackage{url}
\usepackage{tikz}
\usetikzlibrary{spy}

\usepackage[numbers,sort&compress]{natbib}
\bibpunct[, ]{[}{]}{,}{n}{,}{,}
\renewcommand\bibfont{\fontsize{10}{12}\selectfont}

\theoremstyle{plain}
\newtheorem{theorem}{Theorem}[section]
\newtheorem{lemma}[theorem]{Lemma}
\newtheorem{corollary}[theorem]{Corollary}
\newtheorem{proposition}[theorem]{Proposition}

\theoremstyle{definition}
\newtheorem{definition}[theorem]{Definition}

\theoremstyle{remark}
\newtheorem{remark}{Remark}

\usepackage[capitalize]{cleveref}

\crefname{table}{Table}{Table}
\crefname{lemma}{Lemma}{Lemma}
\crefname{proposition}{Proposition}{Proposition}

\newcommand{\vectorizeGR}[1]{\bm{#1}}

\newcommand{\vlambda}{\vectorizeGR{\lambda}}
\newcommand{\pnp}{PnPSplit$^{+}$}

\DeclareMathOperator{\amin}{argmin}

\DeclareMathOperator{\prox}{prox}
\DeclareMathOperator{\proj}{proj}

\usepackage{pgffor}
\foreach \a in {q,w,e,r,t,y,u,i,o,p,a,s,d,f,g,h,j,k,l,z,x,c,v,b,n,m,Q,W,E,R,T,Y,U,I,O,P,A,S,D,F,G,H,J,K,L,Z,X,C,V,B,N,M}{ 
	\expandafter\xdef\csname v\a\endcsname {{\noexpand\mathbf{\a}}}
}

\def\lp{\left(}
\def\rp{\right)}
\def\lq{\left[}
\def\rq{\right]}

\def\la{\left\langle}
\def\ra{\right\rangle}
\newcommand{\Id}{\vI_{\rm {\bm d}}}

\newcommand{\argmin}[1]{\ensuremath{\underset{\substack{{#1}}}{\amin}}\,\,}

\newcommand{\kiter}[1]{#1^{k}}
\newcommand{\kkiter}[1]{#1^{k+1}}
\newcommand{\vwk}{\kiter{\vw}}
\newcommand{\vwkk}{\kkiter{\vw}}
\newcommand{\vxk}{\kiter{\vx}}
\newcommand{\vxkk}{\kkiter{\vx}}

\newcommand{\vlambdak}{\kiter{\vlambda}}
\newcommand{\vlambdakk}{\kkiter{\vlambda}}
\newcommand{\den}{D_{\bm{\vartheta}}}

\newcommand{\rd}{\mathbb{R}}
\newcommand{\ds}{\displaystyle}

\newcommand{\gsdsplit}{GSDSplit$^+\,$}
\newcommand{\pnpsplit}{PnPSplit$^+\,$}

\begin{document}


\title{A Relaxed Gradient Step Denoiser for Splitting Methods in Poisson Inverse Problems}

\author{
\name{A. Benfenati\textsuperscript{a,b}\thanks{CONTACT A. Benfenati. Email: alessandro.benfenati@unimi.it}}
\affil{\textsuperscript{a}Environmental and Science Policy Department, Università degli studi di Milano, Via Celoria 2, 20133, Milano, Italia}
	 \affil{\textsuperscript{b}GNCS, INDAM, Viale Aldo Moro 5, 00185 Roma, Italia}
}

\maketitle

\begin{abstract}
Plug\&Play methods combine classical variational models with learned denoisers and have achieved strong results in imaging inverse problems. Their convergence has been widely studied for Gaussian data, whereas Poisson models require additional care because of the nonquadratic Kullback--Leibler fidelity. This work introduces \gsdsplit, a splitting method built upon \pnpsplit, in which the generic denoising block is replaced by a relaxed Gradient Step Denoiser. The resulting method retains explicit Poisson fidelity updates. The denoiser is defined through the gradient of a learned convex potential parameterized by an Input Convex Neural Network. Its architecture and training promote both blind denoising capability and a smoothness condition sufficient for firm nonexpansiveness. Empirical smoothness estimates are used to select a relaxation parameter compatible with the convergence assumptions of \pnpsplit. Numerical experiments provide evidence that the trained denoiser operates in an FNE-compatible regime on the considered validation data. \gsdsplit also yields stable reconstructions outside the empirically supported range and shows substantially lower sensitivity to the ADMM parameter, while remaining effective for different blur operators and Poisson noise levels.
\end{abstract}

\begin{keywords}
Plug and Play; Gradient Step Denoiser; Poisson data; Inverse Problems
\end{keywords}

\section{Introduction}

Image restoration is central in scientific imaging, including microscopy \cite{8901171,zunino2023reconstructing}, astronomy \cite{bertero2021introduction,aghabiglou2024r2d2}, and medical imaging \cite{morotti2026adaptive,9732379,Loreto02012025}. The image acquisition process is  described by the linear model \cite{bertero2021introduction,10.1088/2053-2563/aae109}
\begin{equation}
	\label{eq:linModel}
	\vg = \mathcal{N}(\vH\,\vx^\star + b),
\end{equation}
where $\vx^\star\in\rd^n$ is the clean data, $\vg\in\rd^m$ is the recorded data, $\vH\in\rd^{m \times n}$ is the linear blur operator, $b\in\rd^+$ is a constant background term, and $\mathcal{N}$ models the statistical noise. The standard assumptions on the operator $\vH$ are $\vH_{i,j} \geq 0\,  \forall i,j, \quad \vH^\top\bm{1} = \bm{1}$, where $\bm{1}$ is the vector whose components are all equal to 1. These assumptions are commonly satisfied in practical applications, such as in microscopy and astronomy. The noise model $\mathcal{N}$ depends on the physics underlying the acquisition process: the most common one is Gaussian noise, but several applications also involve speckle noise \cite{cheng2024diffusion}, salt\&pepper  or Cauchy \cite{BENFENATI2026986} noise.

Image restoration problems aim to recover an estimate of $\vx^\star$ from $\vg$ given $\vH$, $b$, and the noise model. This work focuses on images corrupted by Poisson noise, which is  signal-dependent and naturally arises in photon-limited acquisition systems, including fluorescence microscopy, astronomy, and emission tomography. Adopting a maximum a posteriori approach (MAP), an estimation of $\vx^\star$ is obtained by solving \cite{10.1088/2053-2563/aae109}
\begin{equation}
	\label{eq:varpb}
	\argmin{\vx\in\rd^n_+} KL(\vH\,\vx+b,\vg) + \beta\,R(\vx),
\end{equation} 
where $KL$ is the generalized Kullback--Leibler functional
\begin{equation}
	\label{eq:KL}
	KL(\vH\,\vx+b,\vg) = \sum_i \lq g_i\log\lp\frac{g_i}{(\vH\vx)_i+b}\rp + (\vH\vx)_i+b-g_i\rq,
\end{equation}
and it has the role of the data fidelity function. In \eqref{eq:KL} the convention $0\log(0)=0$ is adopted. The constraint enforces nonnegativity on the reconstructed image, which is natural since pixel values represent light intensities. The function $R$ is a regularizer that controls the effect of noise and promotes desired image features, such as sharp edges or sparsity. The parameter $\beta\in\rd^+$ is called the regularization parameter and weights the influence of $R$ on the objective functional.

The optimization problem \eqref{eq:varpb} can be solved in several ways. The Alternating Di-
rection Method of Multipliers \cite{boyd2011distributed,Gao03032020} remains popular and has led to several variants,
including PIDAL \cite{figueiredo2010restoration}, PIDSPLIT+ \cite{Setzer2012} and weighted TV approaches \cite{9732379}. First or- der methods have been widely explored and successfully employed even in High Performance Computing settings \cite{zanella2013towards}. Among first-order approaches, one can find also proximal gradient methods \cite{chouzenoux2024variational}.

In \cite{6737048}, the authors observed that in an ADMM scheme the proximal step related to the regularization function $R$ can be considered as a Gaussian denoising step: therefore, they proposed a Plug And Play scheme (PnP), which consists of replacing the proximal step with a Gaussian denoiser $D$. Classical choices are the Block Matching and 3D filtering (BM3D) \cite{dabov2007image}, the non-local means filter (NLM) \cite{buades2011non} or a denoising convolutional neural network (CNN) \cite{7839189}. The convergence analysis of several PnP schemes requires specific assumptions on the denoiser, such as nonexpansiveness or firm nonexpansiveness (FNE). In the FNE case, the denoiser can be interpreted as the resolvent of a maximally monotone operator.

In the era of deep learning and neural networks, one possible strategy to satisfy this is to train a CNN that is FNE \cite{doi:10.1137/20M1387961}. Other approaches include controlling the Lipschitz constant of the
denoiser \cite{ducotterd2024improving}, and considering constrained variants of
ADMM \cite{jimaging10020050}. The investigation of PnP methods led to Regularization by Denoising (RED) theory \cite{romano2017little}, which shifted the requirement on the Jacobian of $D$. The RED theoretical framework is solid and well-established; nevertheless it presents some practical issues,  that have been addressed and partially solved under a constrained framework (RED-PRO) \cite{cohen2021regularization}: RED provides an explicit denoiser-induced regularization under assumptions on the denoiser Jacobian, while RED-PRO instead adopts a constrained approach considering the fixed-point set of the denoiser. Another constrained approach (CRED) for the RED framework has been addressed in \cite{10415495}, using a discrepancy measure between the recorded data $\vg$ and the estimated solution. Recently, Deep Equilibrium Models have been employed to tackle the theoretical issues of PnP methods from a different point of view \cite{daniele2026deep}.

PnP methods have mainly focused on image restoration problems where $\vg$ is perturbed by Gaussian noise, with the fidelity function being the least-square functional. Poisson data instead require using the KL function, as shown in \eqref{eq:varpb}: this functional is not quadratic and its gradient may become problematic when the estimated intensities
are close to zero. In \cite{ROND201696} a classical ADMM approach is employed: the presence of the KL functional leads to the use an iterative procedure for solving the optimization step related to such functional. The authors of \cite{9616253} introduce a quantum-based denoising scheme for Poisson deconvolution, and \cite{hurault2023convergent} shows how the Euclidean geometry is not the most suitable one in the presence of Poisson noise, while the one induced by Bregman divergences is more descriptive. Bayesian Poisson reconstruction has also been addressed through Langevin-based sampling methods in \cite{klatzer2026efficient}. Primal-Dual schemes, which avoid the inversion of linear operators, are deeply studied in \cite{11299501,7936537}.

The work \cite{benfenati2025plug} introduced the \pnpsplit method, which couples the PnP approach with  PIDSPLIT+. It employs an ADMM scheme, according to \cite{Setzer2012}; then,  the proximity operator of the regularization function is replaced by a Gaussian CNN denoiser trained to be FNE, following \cite{doi:10.1137/20M1387961}, yielding a convergence result \cite[Proposition 2]{benfenati2025plug}. 

This work proposes \gsdsplit, a structured variant of \pnpsplit in which the generic PnP denoising block is replaced by a Gradient Step Denoiser \cite{hurault2022gradient} of the form
\begin{equation}
	\label{eq:GSD}
	D_\vartheta(\vx) = \vx - \nabla \psi_\vartheta(\vx),	
\end{equation}
The potential $\psi_\vartheta$ is parametrized by an input convex neural network (ICNN) \cite{pmlr-v70-amos17b,zhang2026rethinking} and trained through a residual Gaussian denoising loss, together with an empirical Hessian spectral penalty promoting $L$-smoothness with $L$ close to 1. Under the ideal assumptions that $\psi_\vartheta$ is convex and $L$-smooth with $L\leq1$, the operator $D_\vartheta$ is FNE, and therefore fits the convergence framework of \pnpsplit.  Gradient Step Denoisers form a structured class of denoisers of the form \eqref{eq:GSD}: their residual $\Id-D_\vartheta$ is a conservative vector field, because it is the gradient of a learned potential. This property connects GSDs with variational regularization, monotone operator theory and fixed-point formulations \cite{hurault2022gradient}. When $\psi_\vartheta$ is convex and smooth, the resulting operator inherits useful nonexpansiveness properties, making GSDs particularly suitable for PnP algorithms requiring theoretical assumptions on denoising operators, such as \pnpsplit. Additionally, this work considers the more general formulation that includes a
relaxation parameter $\alpha$:
\begin{equation}
	\label{eq:GSD2}
	D_{\vartheta,\alpha}(\vx) = \vx - \alpha\nabla \psi_\vartheta(\vx).
\end{equation}
Such a choice allows to employ L-smooth network with $L>1$, since the FNE condition for $D_{\vartheta,\alpha}$ is satisfied when $\alpha\,L\leq 1$. This makes it possible to employ more general ICNNs in the proposed splitting scheme. 

Moreover, beyond this compatibility with splitting approaches, the GSD structure makes the denoising residual an explicit gradient field, providing a natural bridge between splitting-based PnP schemes and fixed-point formulations such as RED-PRO. This connection will be investigated in future work.

The main contributions of this work are the following.

\gsdsplit is introduced by embedding an ICNN-based Gradient Step Denoiser into the \pnpsplit scheme for Poisson image restoration. Second, a convergence result is obtained by relating the FNE property of the denoiser to the convexity and smoothness of the learned $\psi_\vartheta$ and to the relaxation parameter $\alpha$. Third, a smooth ICNN architecture and a tailored training strategy are implemented to develop a denoiser operator satisfying the desired properties. Finally, the method is numerically assessed in terms of denoising ability, sampled FNE conditions, convergence residuals, reconstruction quality, computational cost, and sensitivity to the
ADMM parameter. The experiments show that \gsdsplit attains lower peak reconstruction quality than \pnpsplit when both methods are optimally tuned, but is substantially less sensitive to an overestimate of the ADMM parameter.

\medskip

This work is organized as follows. \cref{sec:GSDSplit} presents the novel approach, building on previous literature on PnP methods. \cref{sec:ICNN} presents the developed denoiser, with details on its architecture and the training phase. \cref{sec:num} assesses the performance of the proposed method, its robustness with respect to its hyperparameters, and a comparison with splitting methods. Finally, \cref{sec:conc} summarizes the presented approach and outlines directions for future research.

\paragraph*{Notations.} $\rd^n$ denotes the real vector space, $\rd^n_+$ is the nonnegative orthant of $\rd^n$. $\rd^{m\times n}$ denotes the real vector space of real matrices with $m$ rows and $n$ columns. Bold capital letters denote matrices, whereas bold lowercase letters denote vectors. The notation $\|\vx\|_p$ denotes the $p$-norm of the vector $\vx$. Italic and Greek letters denote real constants. The operator $\proj_A$ is the projection on the set $A$. The indicator function $\iota_A(x)$ is 0 if $x\in A$, $+\infty$ otherwise. The proximal operator,  also called proximity operator or simply prox, of a function $f$ at a point $\vz$ is
$$
\prox_f(\vz) = \argmin{\vx} f(\vx) +\frac12\|\vx-\vz\|_2^2.
$$

\section{\gsdsplit: a Gradient-Step-Denoising variant of \pnpsplit}
\label{sec:GSDSplit}

This section introduces the proposed \gsdsplit method. Firstly, the \pnpsplit scheme for Poisson image restoration and its convergence requirement on the denoising operator is presented. Then,  the generic denoiser is replaced by a Gradient Step Denoiser, and the investigation focuses on the condition under which the resulting method remains within the \pnpsplit convergence framework.

\subsection{PnP Splitting for Poisson Inverse Imaging}
Poisson image restoration problems are addressed by solving \eqref{eq:varpb}, which is recast  below encompassing the constraint in the objective via the indicator function:
$$
	\argmin{\vx\in\rd^n} KL(\vH\,\vx+b,\vg) + \beta\,R(\vx) + \iota_{\rd^n_+}(\vx),
$$
Following \cite{Setzer2012}, define $\vM=\lp\vH^\top, \Id, \Id\rp^\top$ such that $
\vM\vx = \vw$, with $\vw=\lp \vw_1^\top-b, \vw_2^\top, \vw_3^\top\rp^\top$. Therefore the problem is recast in
$$
\argmin{\vw,\vx}  KL(\vw_1,\vg) + \beta\,R(\vw_2) + \iota_{\rd^n_+}(\vw_3), \qquad \text{s.t. } \vM\vx=\vw.
$$
Introducing the augmented Lagrangian function of this problem and after some algebraic manipulation, this yields the saddle-point problem
$$
\min_{\vw,\vx}\max_{\vlambda}  KL(\vw_1,\vg) + \beta\,R(\vw_2) + \iota_{\rd^n_+}(\vw_3) + \frac{1}{2\gamma}\|\vM\vx-\vw+\vlambda\|_2^2 - \frac{1}{2\gamma}\|\vlambda\|_2^2,
$$
where $\vlambda$ is the scaled Lagrangian multiplier. The PIDSPLIT+ method in \cref{al:pidsplit+} iteratively approaches a solution to the problem: the convergence results can be found in \cite{Setzer2012}. 
\begin{algorithm}[htbp]
	\caption{PIDSPLIT+\cite{Setzer2012}}
	\label{al:pidsplit+}
	\begin{algorithmic}
		\State{Set $\vx^0, \vw^0$ and accordingly $\vlambda^0$; select the parameter $\gamma>0$.}
		\For{$k=0,1,\dots$}
		\State{$\vxkk = \lp\vH^\top\vH + 2\Id\rp^{-1}\lq\vH^\top\lp\vwk_1-b-\vlambdak_1\rp + \vwk_2-\vlambdak_2 + \vwk_3-\vlambda_3^k\rq$}
		\State{}
		\State{$ \vwkk_1 =\ds\frac12\lp\ \vH\vxkk+\vlambdak_1-\gamma +\sqrt{\lp\vH\vxkk+\vlambdak_1-\gamma\rp^2+4\gamma\,\vg}\rp$}
		\State{}
		\State{$\vwkk_2 = \prox_{\gamma \beta R} \lp\vxkk+\vlambdak_2\rp$}
		\State{}
		\State{$\vwkk_3 = \proj_{\rd^n_+} \lp\vxkk+\vlambdak_3\rp$}
		\State{}
		\State{$ \vlambdakk =\vlambdak +\vM\vxkk-\vwkk $}
		\EndFor
	\end{algorithmic}
\end{algorithm}
The main advantage of this splitting is that the KL-related subproblem admits a componentwise closed-form solution, avoiding inner iterative solvers. The update step in $\vx$ has one and only one solution, because $\vH^\top\vH+2\Id\succ0$. When $\vH$ is a convolution operator with suitable boundary conditions, this linear system can be solved efficiently by FFT.

The update of $\vw_2$ relies on the proximity operator of $R$, therefore one needs \textit{i)} to select a regularization function, such as $\ell_1$ norm or Total Variation, and \textit{ii)} compute  the proximity operator. 
Adopting a PnP approach, in \cite{benfenati2025plug} the update on $\vw_2$ is performed via a Gaussian denoiser, denoted by $\mathcal{D}_{\gamma\beta}$, properly trained to be FNE: this led to the \pnpsplit method, reported in \cref{al:pnpsplit+}. The definition of firmly nonexpansive operators is reported for the sake of completeness.
\begin{definition}
	\label{def:FNE}
	Let $\mathcal{H}$ be a Hilbert space and let
	$T:\mathcal{H}\to\mathcal{H}$. The operator $T$ is firmly
	nonexpansive if
	$$
	\|T(x)-T(y)\|_2^2 \leq \la x-y, T(x)-T(y)\ra
	$$
	for every $x,y\in\mathcal{H}$.
\end{definition}
\cref{al:pnpsplit+} exploits the main advantages of PIDSPLIT+ and PnP approaches: from the former, it inherits the explicit formula for both $\vxkk$ and $\vwkk_1$, hence the deblurring step can be computed efficiently. On the other hand, adopting a PnP framework avoids the need to choose an explicit regularizer $R$ and to derive or compute its proximal operator. When the denoiser is FNE, then it is the resolvent of a maximally monotone operator: this property allows to state a convergence result for \pnpsplit, which is reported below.
\begin{algorithm}[ht]
	\caption{\pnp}
	\label{al:pnpsplit+}
	\begin{algorithmic}
		\State{Set $\vx^0, \vw^0$ and accordingly $\vlambda^0$; select the parameter $\gamma>0$.}
		\For{$k=0,1,\dots$}
		\State{$\vxkk = \lp\vH^\top\vH + 2\Id\rp^{-1}\lq\vH^\top\lp\vwk_1-b-\vlambdak_1\rp + \vwk_2-\vlambdak_2 + \vwk_3-\vlambda_3^k\rq$}
		\State{}
		\State{$ \vwkk_1 =\ds\frac12\lp\ \vH\vxkk+\vlambdak_1-\gamma +\sqrt{\lp\vH\vxkk+\vlambdak_1-\gamma\rp^2+4\gamma\,\vg}\rp,$}
		\State{}
		\State{$\vwkk_2 = \mathcal{D}_{\beta\gamma}\lp\vxkk+\vlambdak_2\rp$}
		\State{}
		\State{$\vwkk_3 = \proj_{\rd^n_+} \lp\vxkk+\vlambdak_3\rp$}
		\State{}
		\State{$ \vlambdakk =\vlambdak +\vM\vxkk-\vwkk $}
		\EndFor
	\end{algorithmic}
\end{algorithm}
\begin{proposition} 
		\label{prp:convpnp}\cite[Prop. 2]{benfenati2025plug} Let $\den$ be a firmly nonexpansive Gaussian denoiser, and assume that it is the resolvent of a maximally monotone operator $A$. Set $\mathcal{D}_{\gamma\beta}=\den$ in \cref{al:pnpsplit+}. For any $\vx^0, \vw^0$ and for any $\gamma\in\rd^+$ the sequences $\{\vlambdak\}_k$ and $\{\vwk\}_k$ generated by \pnp converge. The sequence $\{\vxk\}_k$ computed in \pnp converges to $\tilde\vx$ such that
		$$
		0\in\vH^\top\nabla KL(\vH\tilde\vx+b, \vg) + A(\tilde\vx) + N_{\rd_+^n}(\tilde \vx),
		$$ 
		where $N_{\rd_+^n}$ is the normal cone to $\rd^n_+$, if either of the following conditions holds:
		\begin{enumerate}[label=\roman*)]
			\item The primal problem has one and only one solution
			\item The optimization problem
			\begin{equation}\label{eq:mineProp}
				\argmin{\vx} \frac{1}{2\gamma}\|\vM\vx-\hat\vw +\hat\vlambda \|_2^2, \quad 
				\hat\vw = \lim_{k\to\infty} \vwk, \quad \hat\vlambda = \lim_{k\to\infty} \vlambdak 
			\end{equation}
		\end{enumerate}
		has a unique solution.
\end{proposition}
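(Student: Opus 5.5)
The plan is to read \pnp as an instance of ADMM applied to a \emph{monotone inclusion} rather than to a minimization problem. Then the Eckstein--Bertsekas convergence theory, i.e.\ ADMM viewed as Douglas--Rachford splitting on the dual, carries the argument. Since $\den$ is FNE and by assumption $\den=J_A=(\Id+A)^{-1}$, the step $\vwkk_2=\den(\vxkk+\vlambdak_2)$ is the resolvent step for the operator $A$. I would first absorb the scaling: the PIDSPLIT+ template uses $\prox_{\gamma\beta R}=J_{\gamma\beta\partial R}$, and here $\gamma\beta\,\partial R$ is replaced by $A$. The remaining blocks are already resolvents: the $\vw_1$ update is $\prox_{\gamma KL(\cdot,\vg)}$ in closed form, and the $\vw_3$ update is $\proj_{\rd^n_+}=J_{N_{\rd^n_+}}$. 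I would therefore define the maximally monotone operator
$$\mathcal{B}(\vw)=\gamma\,\partial KL(\vw_1,\vg)\times A(\vw_2)\times N_{\rd^n_+}(\vw_3),$$
which is maximal monotone as a product of maximal monotone operators. The target inclusion is then $0\in \vM^\top\mathcal{B}(\vM\vx)$, suitably shifted by $b$.

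Next I would write the iteration in scaled form. The $\vx$-step is the exact minimization of $\frac{1}{2\gamma}\|\vM\vx-\vwk+\vlambdak\|_2^2$, which is well posed because $\vM^\top\vM=\vH^\top\vH+2\Id\succ0$. The $\vw$-step is $J_{\mathcal{B}}(\vM\vxkk+\vlambdak)$, and the multiplier update has the standard form. This is exactly the generalized ADMM of Eckstein--Bertsekas for the pair ($\vM$, $\mathcal{B}$), which is equivalent to Douglas--Rachford on the dual inclusion $0\in -\vM\,\partial(\ldots)^{*}+\mathcal{B}^{-1}$. Their theorem gives convergence of $\{\vlambdak\}_k$ and $\{\vwk\}_k$ for every $\gamma>0$, provided a solution of the inclusion exists. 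Because $\vM$ has full column rank, the $\vx$-subproblem map is single-valued and continuous, so $\vxkk=(\vM^\top\vM)^{-1}\vM^\top(\vwk-\vlambdak)$ converges to some $\tilde\vx$. At that limit, $\vM\tilde\vx=\hat\vw$, since $\vlambdakk-\vlambdak\to0$.

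To identify the limit, I would pass to the limit in the optimality conditions. The resolvent relation gives $\frac{1}{\gamma}\hat\vlambda\in\mathcal{B}(\hat\vw)$, up to the $\gamma$ scaling convention, using closedness of the graph of a maximal monotone operator. The $\vx$-optimality gives $\vM^\top\hat\vlambda=0$. Combining these with $\hat\vw=\vM\tilde\vx$ yields
$$0\in\vH^\top\nabla KL(\vH\tilde\vx+b,\vg)+A(\tilde\vx)+N_{\rd^n_+}(\tilde\vx),$$
where $KL$ is differentiable at $\hat\vw_1>0$ whenever $g_i>0$. Conditions i) and ii) enter precisely here. The $\vx$-limit must be uniquely determined as the minimizer of \eqref{eq:mineProp}, and uniqueness of either the primal solution or that least-squares problem is what makes $\tilde\vx$ well defined. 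Under i), uniqueness follows from the solution set of the inclusion. Under ii), it is automatic from full rank.

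The main obstacle is existence of a zero of the inclusion, which Eckstein--Bertsekas require for multiplier convergence. Otherwise $\{\vlambdak\}_k$ diverges. For the PIDSPLIT+ setting this followed from coercivity of $KL+\iota_{\rd^n_+}$, but now $A$ is not a subdifferential. I would try to obtain it from a Brezis--Haraux type range argument: the sum of $\vH^\top\partial KL(\vH\cdot+b,\vg)+N_{\rd^n_+}$, which is maximal monotone with bounded-below coercive potential, and $A$, whose domain is the whole space since $\den$ is everywhere defined. Failing that, I would simply adopt existence as part of the standing hypothesis, as is implicit in the cited proposition.
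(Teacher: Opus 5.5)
Your main line is correct and is the standard route. The paper itself gives no proof of this proposition; it only cites \cite{benfenati2025plug}. The argument behind it is the Eckstein--Bertsekas ADMM/Douglas--Rachford theory on which the PIDSPLIT+ convergence result of \cite{Setzer2012} rests. Reading the three $\vw$-blocks as resolvents of your product operator $\mathcal{B}$ is right. The dual Douglas--Rachford equivalence only uses resolvents and $J_{\mathcal{B}}+J_{\mathcal{B}^{-1}}=\Id$, so it does pass from $\partial g$ to any maximally monotone $\mathcal{B}$. Getting $\vxk\to\tilde\vx$ from continuity of $(\vM^\top\vM)^{-1}\vM^\top$ is fine. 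So is identifying the limit via $\vlambdakk\in\mathcal{B}(\vwkk)$, $\vM^\top(\vlambdakk+\vwkk-\vwk)=0$ and closedness of the graph.

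Two clean-ups are needed.
\begin{enumerate}[label=\roman*)]
\item Since $\vxk$ converges by full column rank alone, conditions i) and ii) play no role beyond the observation that ii) always holds here. This is exactly how the paper's Corollary uses the proposition, so drop the claim that they ``enter precisely here''.
\item What you actually obtain is $0\in\gamma\vH^\top\nabla KL(\vH\tilde\vx+b,\vg)+A(\tilde\vx)+N_{\rd^n_+}(\tilde\vx)$. The $\vw_1$-step is $\prox_{\gamma KL(\cdot,\vg)}$ while the $\vw_2$-step is $J_A$, so the displayed inclusion holds only with $A$ replaced by $A/\gamma$. Say this explicitly rather than invoking a scaling convention.
\end{enumerate}

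The step that would fail is your Brezis--Haraux route to existence, and it cannot be repaired. First, $\operatorname{dom}A=\operatorname{ran}\den$, not $\rd^n$. By Minty's theorem every maximally monotone operator has an everywhere-defined resolvent, so that fact says nothing about $\operatorname{dom}A$; for $\den=\proj_C$ one gets $A=N_C$.

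More importantly, existence is false under the stated hypotheses. Take $\den(\vz)=\vz+t\bm{1}$ with $t>0$. It is firmly nonexpansive (with equality) and is the resolvent of the maximally monotone operator $A\equiv-t\bm{1}$. Using $\vH^\top\bm{1}=\bm{1}$ and $b>0$, the limit inclusion requires, for every $i$, $\gamma(1-h_i)\geq t$ with $h_i=(\vH^\top(\vg/(\vH\vx+b)))_i\geq 0$ (componentwise division). This is impossible once $t>\gamma$; indeed $\gamma KL(\vH\vx+b,\vg)-t\la\bm{1},\vx\ra$ is then unbounded below on $\rd^n_+$.

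Your own limit argument shows that convergence of $(\vlambdak,\vwk)$ would produce such a zero. Hence, for this denoiser, the two sequences cannot both converge from any starting point. Existence of a zero of $\vM^\top\mathcal{B}(\vM\,\cdot)$, i.e.\ of a Kuhn--Tucker pair, is therefore a necessary hypothesis missing from the statement, not something to be derived. It is part of the Eckstein--Bertsekas theorem, which asserts unboundedness of the iterates otherwise. Your fallback of adopting it is the right fix, and with it your proof is complete.
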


\subsection{GSD embedded into \pnpsplit}
Gradient step denoisers, introduced in \cite{hurault2022gradient} are structured operators involving the gradient of a scalar potential. In this work the following class of GSDs is considered:
\begin{equation}
	\label{eq:GSDrel}
	D_{\vartheta,\alpha}(\vx) = \vx -\alpha \nabla\psi_\theta(\vx),
\end{equation}
where $\psi_\vartheta$ is a learned potential. For $\alpha \in(0,1)$, 
$$
D_{\vartheta,\alpha}
=
(1-\alpha)\Id+\alpha D_{\vartheta,1},
$$
and therefore the denoising correction is relaxed. 

In classical PnP approaches, a denoiser is generally a black box removing Gaussian noise, while in the GSD approach, instead,  its residual has an explicit structure $\vx-D_{\vartheta,\alpha}(\vx) = \alpha\nabla\psi_\theta(\vx)$, therefore $\alpha\nabla\psi_\theta(\vx)$ is indeed the learned denoising correction. The vector $\nabla\psi_\vartheta(\vx)$ is trained to predict the denoising residual, and \eqref{eq:GSDrel} can equivalently be interpreted as one gradient-descent step on the learned potential
$\psi_\vartheta$.

Recent work \cite{zhang2026rethinking} has established a connection between ICNN-based GSDs and pseudo-contractive denoisers. In the present work, this more general framework is restricted to the firmly nonexpansive regime required by the convergence theory of \pnpsplit. More precisely, the potential $\psi_\vartheta$ is parameterized by an input convex neural network, while its smoothness is controlled during training through an empirical Hessian spectral penalty.

\begin{lemma}
	\label{lem:FNE}
	Let $\psi_\vartheta:\rd^n \to \rd$ be  convex and $L$-smooth. Then
	$$
	D_{\vartheta,\alpha}(\vx) = \vx-\alpha\nabla\psi_\vartheta(\vx)
	$$
	is firmly nonexpansive for any $0<\alpha\leq \ds\frac{1}{L}$.
\end{lemma}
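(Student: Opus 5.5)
The plan is to use the Baillon--Haddad theorem: a convex, $L$-smooth function has a $\frac1L$-cocoercive gradient. Writing $\vu=\vx-\vy$ and $\vv=\nabla\psi_\vartheta(\vx)-\nabla\psi_\vartheta(\vy)$, this gives
$$
\la \vu,\vv\ra \geq \frac1L\|\vv\|_2^2 \qquad \forall\,\vx,\vy\in\rd^n .
$$
Since $\alpha\leq 1/L$, we have $1/L\geq\alpha$. Together with $\|\vv\|_2^2\geq 0$, this yields the weaker inequality $\la\vu,\vv\ra\geq\alpha\|\vv\|_2^2$, and this is the only property of $\psi_\vartheta$ the argument will use.

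Next I would expand both sides of the firm nonexpansiveness inequality in \cref{def:FNE} for $T=D_{\vartheta,\alpha}$. Here $T(\vx)-T(\vy)=\vu-\alpha\vv$. The left side is
$$
\|\vu-\alpha\vv\|_2^2=\|\vu\|_2^2-2\alpha\la\vu,\vv\ra+\alpha^2\|\vv\|_2^2,
$$
and the right side is
$$
\la\vu,\vu-\alpha\vv\ra=\|\vu\|_2^2-\alpha\la\vu,\vv\ra .
$$
Subtracting, the FNE inequality is equivalent to $\alpha^2\|\vv\|_2^2\leq\alpha\la\vu,\vv\ra$. Because $\alpha>0$, we may divide by $\alpha$, and this becomes exactly the cocoercivity bound $\alpha\|\vv\|_2^2\leq\la\vu,\vv\ra$ obtained above, which closes the argument.

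The only substantive step is the appeal to Baillon--Haddad. I would cite it as a standard result (for instance Nesterov's textbook, Thm.~2.1.5). For self-containedness I could also sketch its proof: fix $\vy$ and set $\phi(\vz)=\psi_\vartheta(\vz)-\la\nabla\psi_\vartheta(\vy),\vz\ra$. This $\phi$ is convex and $L$-smooth and is minimized at $\vy$, so the descent lemma gives $\phi(\vy)\leq\phi(\vx)-\frac{1}{2L}\|\nabla\phi(\vx)\|_2^2$. Writing the same inequality with $\vx$ and $\vy$ swapped and summing the two yields the cocoercivity bound.

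As a remark, one could equivalently note that $D_{\vartheta,\alpha}$ is FNE if and only if $2D_{\vartheta,\alpha}-\Id=\Id-2\alpha\nabla\psi_\vartheta$ is nonexpansive. However, the direct expansion above is shorter and avoids that detour.
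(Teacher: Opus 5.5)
Your proof is correct and follows essentially the same route as the paper. Both arguments invoke the $\frac1L$-cocoercivity of $\nabla\psi_\vartheta$ (Baillon--Haddad), weaken it via $\alpha\leq 1/L$ to $\alpha\|\vv\|_2^2\leq\langle\vu,\vv\rangle$, and expand $\|D_{\vartheta,\alpha}(\vx)-D_{\vartheta,\alpha}(\vy)\|_2^2$ to reduce the firm nonexpansiveness inequality to this bound. Your version, which first isolates the equivalent condition $\alpha^2\|\vv\|_2^2\leq\alpha\langle\vu,\vv\rangle$ and also sketches a proof of Baillon--Haddad, is if anything slightly cleaner and more self-contained than the paper's chain of (in)equalities.
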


\begin{proof}
	\newcommand{\nx}{\nabla\psi_\vartheta(\vx)}
	\newcommand{\ny}{\nabla\psi_\vartheta(\vy)}
	If $\psi_\vartheta$ is convex and $L$-smooth, then its gradient is $1/L$-cocoercive:
	$$
	\la\nabla\psi_\theta(\vx)- \nabla\psi_\theta(\vy),\vx-\vy\ra\geq \frac1L\|\nabla\psi_\theta(\vx)-\nabla\psi_\theta(\vy)\|_2^2
	$$
	and if $\alpha\leq\frac1L$
	$$
	\alpha\la\nabla\psi_\theta(\vx)- \nabla\psi_\theta(\vy),\vx-\vy\ra\geq \alpha^2\|\nabla\psi_\theta(\vx)-\nabla\psi_\theta(\vy)\|_2^2
	$$
	Then
	\begin{eqnarray*}
		\|D_{\vartheta,\alpha}(\vx)-D_{\vartheta,\alpha}(\vy)\|_2^2 &=& \|\vx-\alpha\nx - (\vy-\alpha\ny)\|_2^2 \\
		&=&\|\vx- \vy\|_2^2 + \|\alpha\nx - \alpha\ny\|_2^2\\ &&+2\la\vx-\vy,\alpha\ny-\alpha\nx\ra\\
		&=&\|\vx- \vy\|_2^2 + \|\alpha\nx - \alpha\ny\|_2^2\\ &&-2\la\vx-\vy,\alpha\nx-\alpha\ny\ra\\
		&\leq&\|\vx- \vy\|_2^2 + \alpha\la\nx- \ny,\vx-\vy\ra\\ &&-2\la\vx-\vy,\alpha\nx-\alpha\ny\ra\\
		&=&\|\vx- \vy\|_2^2  -\la\vx-\vy,\alpha\nx-\alpha\ny\ra\\
		&=&\|\vx- \vy\|_2^2  -\la\vx-\vy,\vx-\vx+\alpha\nx+\vy-\vy-\alpha\ny\ra\\
		&=&\la\vx-\vy,D_{\vartheta,\alpha}(\vx)-D_{\vartheta,\alpha}(\vy)\ra.
	\end{eqnarray*}
	and hence $D_{\vartheta,\alpha}$ is FNE.
\end{proof} 
A denoiser of the form \eqref{eq:GSDrel} satisfying Lemma~\ref{lem:FNE} can be embedded in the splitting scheme of \cref{al:pnpsplit+}: this new approach is shown in \cref{al:gsdsplit+}.
\begin{algorithm}[ht]
	\caption{\gsdsplit}
	\label{al:gsdsplit+}
	\begin{algorithmic}
		\State{Set $\vx^0, \vw^0$ and accordingly $\vlambda^0$; select the parameter $\gamma>0$. Let $\psi_\vartheta$ be a trained potential and $0<\alpha\leq\,L^{-1}$ satisfying hypotheses in Lemma~\ref{lem:FNE}.}
		\For{$k=0,1,\dots$}
		\State{$\vxkk = \lp\vH^\top\vH + 2\Id\rp^{-1}\lq\vH^\top\lp\vwk_1-b-\vlambdak_1\rp + \vwk_2-\vlambdak_2 + \vwk_3-\vlambda_3^k\rq$}
		\State{}
		\State{$ \vwkk_1 =\ds\frac12\lp\ \vH\vxkk+\vlambdak_1-\gamma +\sqrt{\lp\vH\vxkk+\vlambdak_1-\gamma\rp^2+4\gamma\,\vg}\rp,$}
		\State{}
		\State{$\vwkk_2 = \vxkk+\vlambdak_2 - \alpha\nabla\psi_\vartheta(\vxkk+\vlambdak_2)$}
		\State{}
		\State{$\vwkk_3 = \proj_{\rd^n_+} \lp\vxkk+\vlambdak_3\rp$}
		\State{}
		\State{$ \vlambdakk =\vlambdak +\vM\vxkk-\vwkk $}
		\EndFor
	\end{algorithmic}
\end{algorithm}
The convergence result follows from the assumptions on the denoiser.
\begin{corollary}
	Let $\psi_\vartheta$ be a function satisfying the hypothesis in Lemma~\ref{lem:FNE}. Then \gsdsplit converges to  $\tilde\vx$ such that
	$$
	0\in\vH^\top\nabla KL(\vH\tilde\vx+b, \vg) + G_{\vartheta,\alpha}(\tilde\vx) + N_{\rd_+^n}(\tilde \vx),
	$$ 
	where $G_{\vartheta,\alpha}$ is a maximally monotone operator, whose resolvent is $D_{\vartheta,\alpha} = \Id - \alpha\nabla\psi_\theta$.
\end{corollary}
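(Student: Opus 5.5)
The plan is to reduce the statement to \cref{prp:convpnp}: show that the denoising step of \gsdsplit is the resolvent of a maximally monotone operator, and check that the remaining steps coincide with those of \pnp.

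First, by Lemma~\ref{lem:FNE}, the hypotheses on $\psi_\vartheta$ (convex, $L$-smooth) together with $0<\alpha\leq 1/L$ give that $D_{\vartheta,\alpha}=\Id-\alpha\nabla\psi_\vartheta$ is firmly nonexpansive on $\rd^n$. It is also defined everywhere, being single-valued on all of $\rd^n$.

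Second, we apply Minty's theorem in its standard form (Bauschke--Combettes, Prop.~23.8): an operator $T:\rd^n\to\rd^n$ with full domain is firmly nonexpansive if and only if it is the resolvent $J_A=(\Id+A)^{-1}$ of a maximally monotone operator $A$. We set
$$G_{\vartheta,\alpha}:=D_{\vartheta,\alpha}^{-1}-\Id,$$
understood as a set-valued inverse. Monotonicity is checked directly. Take $\vu\in G_{\vartheta,\alpha}(\vx)$ and $\vv\in G_{\vartheta,\alpha}(\vy)$. Then $\vx=D_{\vartheta,\alpha}(\vp)$ and $\vy=D_{\vartheta,\alpha}(\vq)$ with $\vp=\vx+\vu$ and $\vq=\vy+\vv$. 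The FNE inequality $\|\vx-\vy\|_2^2\leq\la\vp-\vq,\vx-\vy\ra$ rearranges to $\la\vu-\vv,\vx-\vy\ra\geq0$. Maximality follows from Minty's surjectivity criterion: $\mathrm{ran}(\Id+G_{\vartheta,\alpha})=\mathrm{dom}\,D_{\vartheta,\alpha}=\rd^n$. Single-valuedness of the resolvent is automatic from firm nonexpansiveness.

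Third, we compare \cref{al:gsdsplit+} with \cref{al:pnpsplit+} for the choice $\mathcal{D}_{\beta\gamma}=D_{\vartheta,\alpha}$. Every step is identical, and the $\vw_2$-update is exactly $D_{\vartheta,\alpha}(\vxkk+\vlambdak_2)$. \cref{prp:convpnp} then gives convergence of $\{\vlambdak\}$ and $\{\vwk\}$. Under either condition i) or ii) of that proposition, it also gives convergence of $\{\vxk\}$ to $\tilde\vx$ with $0\in\vH^\top\nabla KL(\vH\tilde\vx+b,\vg)+G_{\vartheta,\alpha}(\tilde\vx)+N_{\rd^n_+}(\tilde\vx)$.

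The main obstacle is bookkeeping rather than analysis. The corollary as stated omits the uniqueness conditions i)/ii) of \cref{prp:convpnp}, so the proof should state explicitly that convergence of $\{\vxk\}$ is inherited under those same conditions. Without them, only the multiplier and $\vw$ sequences are guaranteed to converge. A further remark worth including: since $D_{\vartheta,\alpha}=\Id-\alpha\nabla\psi_\vartheta$, the operator $G_{\vartheta,\alpha}$ has no closed form in general. Only its existence and maximal monotonicity are needed for the argument.
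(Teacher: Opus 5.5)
Your first two steps are correct. Lemma~\ref{lem:FNE} gives firm nonexpansiveness of $D_{\vartheta,\alpha}$ on all of $\rd^n$. Your Minty argument then correctly shows that $G_{\vartheta,\alpha}=D_{\vartheta,\alpha}^{-1}-\Id$ is maximally monotone with resolvent $D_{\vartheta,\alpha}$, a step the paper leaves implicit.

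The gap is in your final paragraph. You conclude that the corollary omits conditions i)/ii) of \cref{prp:convpnp}, and that without them only $\{\vwk\}$ and $\{\vlambdak\}$ are guaranteed to converge. As a result you prove only a conditional version of the statement. In this setting that conclusion is false, and seeing why is exactly the content of the paper's proof: condition ii) holds automatically. The objective in \eqref{eq:mineProp} is a quadratic in $\vx$ with Hessian $\gamma^{-1}\vM^\top\vM=\gamma^{-1}(\vH^\top\vH+2\Id)\succ\bm{0}$. This holds because $\vM=(\vH^\top,\Id,\Id)^\top$ has full column rank whatever $\vH$ is, since its second block is the identity. So the problem is strictly convex and has exactly one minimizer for every pair of limits $\hat\vw,\hat\vlambda$. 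Hence \cref{prp:convpnp} yields convergence of $\{\vxk\}$ unconditionally.

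You can see the same thing directly from the algorithm. The $\vx$-step is $\vxkk=(\vM^\top\vM)^{-1}\vM^\top(\vwk-\vlambdak)$, with $b$ subtracted from the first block of $\vwk$. This is a fixed continuous affine map of $(\vwk,\vlambdak)$, so once those two sequences converge, $\{\vxk\}$ converges too. Adding this observation removes the spurious hypothesis, and your argument then coincides with the paper's proof.
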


\begin{proof}
	The results follows directly from Proposition~\ref{prp:convpnp}: the uniqueness condition in Proposition~\ref{prp:convpnp} is automatically satisfied, since $\vH^\top\vH+2\Id\succ \bm{0}$ and the linear system has one and only one solution.
\end{proof}

\begin{remark}
	For every $\alpha>0$,
	\[
	\operatorname{Fix}(D_{\vartheta,\alpha})
	=
	\operatorname{zer}(\nabla\psi_\vartheta).
	\]
	If $\psi_\vartheta$ is convex, then
	\[
	\operatorname{zer}(\nabla\psi_\vartheta)
	=
	\argmin{\vx\in\rd^n}\psi_\vartheta(\vx).
	\]
	This establishes a natural connection with RED-PRO-type formulations, where the denoising prior is represented through the fixed-point set of the denoiser. This connection is studied in
	depth in \cite{zhang2026rethinking}, where the convergence analysis
	assumes a proper, convex, lower semicontinuous, differentiable data
	fidelity term with Lipschitz-continuous gradient. The numerical
	experiments considered therein mainly rely on a least-squares data
	fidelity. The standard Poisson KL fidelity does not, in general,
	have a globally Lipschitz-continuous gradient on its natural domain,
	and therefore requires additional care.

\end{remark}

\begin{remark}
	The regularization parameter appearing in \eqref{eq:varpb} is encompassed into the denoiser operator.
\end{remark}

\section{Input Convex Neural Network}
\label{sec:ICNN}
This section shows the architecture and provides the training details for the function $\psi_\vartheta$ and the corresponding denoiser $D_{\vartheta,\alpha}$ used in the numerical experiments of \cref{sec:num}.

\subsection{Architecture and Training}
\label{ssec:training}
The employed network is a modification of the ICNN \cite{pmlr-v70-amos17b} available in the \texttt{DeepInverse} Python package \cite{tachella2025deepinverse}. The networks encompasses 5 convolutional layers, each with 16 filters of dimension $3\times3$ and no bias, with stride one and padding of one pixel. The initialization of the net's weights is uniform for \texttt{wz} layers, while for the other layers the weights are drawn from a normal distribution. Moreover, the net is trained on square patches of 64$\times$ 64 pixels: the numerical experience showed that this option is the best trade--off between performance and training time. The sole modification relies on the activation function: instead of using a classical LeakyReLU function, a custom smoothed version is employed. Such function reads as follows
\begin{equation}
	\label{eq:customAct}
	\rho_{a,c}(t)
	=
	a t
	+
	\frac{1-a}{c}
	\log\left(1+\exp(c t)\right),
	\qquad
	a\in[0,1],\quad c>0.
\end{equation}

In the numerical experiments, the parameters are set to $a=0.1$ and $c=5$. The function $\rho_{a,c}$ is a smooth approximation of the LeakyReLU activation, since $\lim_{c\to\infty}\rho_{a,c}(t) = {\rm LeakyReLU}_a(t)$. Its first and second derivatives are $\rho_{a,c}'(t)
=
a+(1-a)\sigma(c t), \rho_{a,c}''(t)
=
(1-a)c
\sigma(c t)\bigl(1-\sigma(c t)\bigr)$, respectively,  where $\sigma(t)=(1+\exp(-t))^{-1}$. Hence, $\rho_{a,c}$ is nondecreasing and convex.  The smooth activation preserves the convexity requirements of the ICNN while avoiding the nondifferentiability of the standard LeakyReLU.

The initial stage of training is performed on noisy images, perturbed with Gaussian noise with $\sigma=0.05$, aiming to provide a stable initialization before extending it to a range of different noise levels. The loss function is the mean-square error loss between clean and denoised images:
\begin{equation}
	\label{eq:LOSS1}
	\mathcal{L}(\vartheta) = \mathbb{E}_{\vx,\bm{\eta},\sigma}\lq\|D_{\vartheta,1}(\vy)-\vx\|_2^2\rq =  \mathbb{E}_{\vx,\bm{\eta},\sigma}\lq\|\nabla\psi_\vartheta(\vy)-\lp\vy-\vx\rp\|_2^2\rq,
\end{equation}
with 
$$
\vy = \vx+\sigma \bm{\eta}, \quad \bm{\eta}\sim\mathcal{G}(\bm{0},\Id),
$$
where $\mathcal{G}(\bm{0},\Id)$ denotes a multivariate Gaussian distribution of zero mean and identity variance. The network's weights are imposed to be nonnegative during the training: this latter condition together with the custom activation function \cref{eq:customAct} ensures the input convexity of the function. The strong convexity option available in the \texttt{DeepInverse} implementation is set to 0.  

The optimization algorithm used for the training is Adam, with learning rate equal to $10^{-3}$, for 40 epochs and minibatch size of 8. For each image of the minibatch, 4 patches of dimension $64\times64$ have been extracted. The dataset for the training consists of a sub set of the DIV2K dataset \cite{Agustsson_2017_CVPR_Workshops}. To limit the computational cost, only the first 100 training images are used, and from each image several patches are  randomly extracted at every epoch. The images are normalized in $[0,1]$.

\paragraph*{Fine-tuning for blind denoising.}
Once the first stage of training is performed, the dataset with 100 images is used again for further training. Each minibatch is perturbed with a different amount of noise, with $\sigma$ randomly selected (from a uniform distribution) in $[0.01, 0.05]$. The optimization settings are still the same: Adam algorithm is used, with learning rate equal to $10^{-3}$, 40 epochs and minibatch size of 8 images, and from each one 4 patches with dimension $64\times64$ are extracted. The loss remains the one in \eqref{eq:LOSS1}.

\paragraph*{Smoothness-oriented fine-tuning.} The particular training choice made in this work considers the case $\alpha=1$: therefore, a further training is performed, in order to promote the condition $\alpha\leq1/L\Leftrightarrow\alpha\,L\leq1\Leftrightarrow L\leq1$. 

This aim is pursued by employing the power iteration \cite{booth2006power} approach to control the eigenvalues of the Hessian. A further term in the loss function has been added:
$$
R_{\rm FNE}(\theta) = \mu_{\rm FNE}\,{\rm ReLU}(l_\theta(\vy)-L_{\rm target})^2, \quad l_\theta(\vy)=\lambda_{\rm max}(\nabla^2\psi_\vartheta(\vy))
$$
where $L_{\rm target}$ is the upper bound of the Lipschitz constant, and $\mu_{\rm FNE}\,$ a regularization parameter. $l_\theta(\vy)$ is the power iteration estimate of the largest eigenvalue of the Hessian at a sample input $\vy$. In this second fine tuning  $L_{\rm target}=1$ and $\mu_{\rm FNE}\,=0.001$: the complete loss function reads hence as
$$
\mathcal{L}(\vartheta) + R_{\rm FNE}(\vartheta).
$$ 
The number of steps for the power iteration algorithm is set to 5, for saving computational cost and time. The optimizer is Adam, with learning rate equal to 0.001, and the training lasts for 40 epochs. This setting still provides reliable results, as shown in \cref{ssec:valDen}.

\begin{remark}
	This penalty promotes the condition $L_\theta\leq1$ on the sampled training distribution, but it does not provide a global Lipschitz gradient.
\end{remark}

\subsection{Validation of the Denoiser}
\label{ssec:valDen}
The validation of the trained denoiser is performed on 10 images of the test subset of DIV2K dataset. Several noise levels are considered, and for each noise level 100 $256\times256$ patches are extracted, 10 per image. \cref{tab:recover} reports the average relative error on noisy and denoised image, together with the gain, for each level considered. The trained denoiser abides to its task for the whole range considered, \emph{i.e.}, $\sigma\in[0.01,0.05]$, even if the improvement at level 0.01 is limited. It is also able to provide reliable results even for noise levels outside the training range, when $\sigma>0.05$, stabilizing at a ratio between the relative error on the reconstruction and on $\vg$ around 1.8. 
\begin{table}[htp]
	\tbl{Denoising performance of $D_{\vartheta,1}$. The relative error with respect to $\vx^\star$, the clean data, of $\vg$ and of the reconstruction $\tilde\vx$ are shown: these are the averages among 100 different patches, extracted from 10 different images, 10 per image. The Gain column shows the  ratio between the average relative errors: a ratio greater than 1 means an improved reconstruction with respect to the data $\vg$.}{
	\begin{tabular}{c|c|c|c||c|c|c|c}
		\toprule
		$\sigma$ & Rel. Err. $\vg$ & Rel. Err. $\tilde\vx$ & Gain &$\sigma$ & Rel. Err. $\vg$ & Rel. Err. $\tilde\vx$ & Gain\\
		\toprule
		0.01 & 0.0270 & 0.0262 & 1.0317 & 0.06 & 0.1632 & 0.0892 & 1.8297\\
		0.02 & 0.0524 & 0.0346 & 1.5141 & 0.07 & 0.1773 & 0.0958 & 1.8515\\
		0.03 & 0.0843 & 0.0505 & 1.6698 & 0.08 & 0.1992 & 0.1074 & 1.8547\\
		0.04 & 0.1081 & 0.0611 & 1.7678 & 0.09 & 0.2340 & 0.1255 & 1.8643\\
		0.05 & 0.1243 & 0.0695 & 1.7884 & 0.10 & 0.2659 & 0.1427 & 1.8637\\
\end{tabular}}
 \label{tab:recover}
\end{table}

%

The second check  consists in evaluating the value for $L$, the smoothness constant. This analysis is performed using the previous 10 images from the DIV2K dataset. A total of 100 $64\times64$ patches are randomly extracted from these images and corrupted with Gaussian additive noise, whose standard deviation is chosen uniformly in $[0.01,0.05]$. 

For each patch, the estimation of the constant $L$ is reached by power iteration method, estimating the maximum eigenvalue of $\nabla^2\psi_\vartheta$, using 20 iterations and 5 random restarts, therefore in a more accurate manner than in the training phase, where the number of power iterations has been fixed to 5 for saving computational time. 

Over the 100  inputs, the local smoothness estimate $\hat L$ of the Lipschitz constant provides an average value of 1.0662, with a standard deviation of 0.002, and a maximum value of 1.069. A further analysis is carried on out-of-sample 200 patches from images of the BSD500 dataset \cite{5557884}, under the same settings. The average estimate of the smoothness constant is $1.066$, with standard deviation $0.001$, and maximum value $1.07$. All the estimated values are above 1 in both experiments.

The experiments of \cref{sec:num} are actually performed on images and/or patches with dimension $256\times256$: a further analysis is done on patches of such dimension, and the previous results are confirmed. The average local estimate $\hat L$ obtained on 100 samples, under the same above settings for the power iteration, is $1.06$, with a standard deviation of $0.001$ and a maximum achieved value of $1.07$. These validation experiments show that the training phase for forcing $L\leq 1$ was not completely successfully.

Since for the requirement for the denoiser $D_{\vartheta,\alpha}$ to be FNE is $\alpha L\leq 1$ and the training phase has been performed setting $\alpha=1$, this empirical evidence, on both test and out-of-sample images, shows that for the considered validation inputs such denoiser operates in an FNE-compatible regime if the relaxation parameter is chosen according to Lemma~\ref{lem:FNE}, \emph{i.e.} $\alpha\hat L\leq1$.

\section{Numerical Experiments}
\label{sec:num}

This section is devoted to assessing the performance of the proposed
\gsdsplit algorithm. The first part investigates the hyperparameter settings
and the robustness of the algorithm with respect to the relaxation parameter
$\alpha$. The second part focuses on a comparison with the original \pnpsplit.

The training phase was performed on Google Colab because of hardware
limitations, whereas the validation and performance tests were carried out
on a MacBook Pro equipped with an Apple M4 processor, 16 GB of unified memory,
and the MPS backend. The code is available at \url{https://github.com/AleBenfe/PnPSplitPlus}.

The numerical assessment is performed using the Peak to Signal Noise Ration (PSNR) \cite{martini2025measuring}, the Relative Error (RE) with respect to the clean image, and using the Similarity Structure Index Measure (SSIM) \cite{Wang2004}. The linear operator $\vH$ is chosen as a Gaussian blur with standard deviation equal to 1, while the Poisson noise level $\nu$ is set to 20. The number of iterations is set to 400, the initial iterate is set equal to $\vg$, and the relative $\vw^0$ is set accordingly. The relaxation parameter $\alpha$, accordingly to the validation experiments done in \cref{ssec:valDen}, is set to 0.9.
\begin{figure}[htbp]
	\centering
	\newcommand{\factor}{0.3}
	\subfloat[$\vx^0=\vg$]{\includegraphics[width=\factor\textwidth]{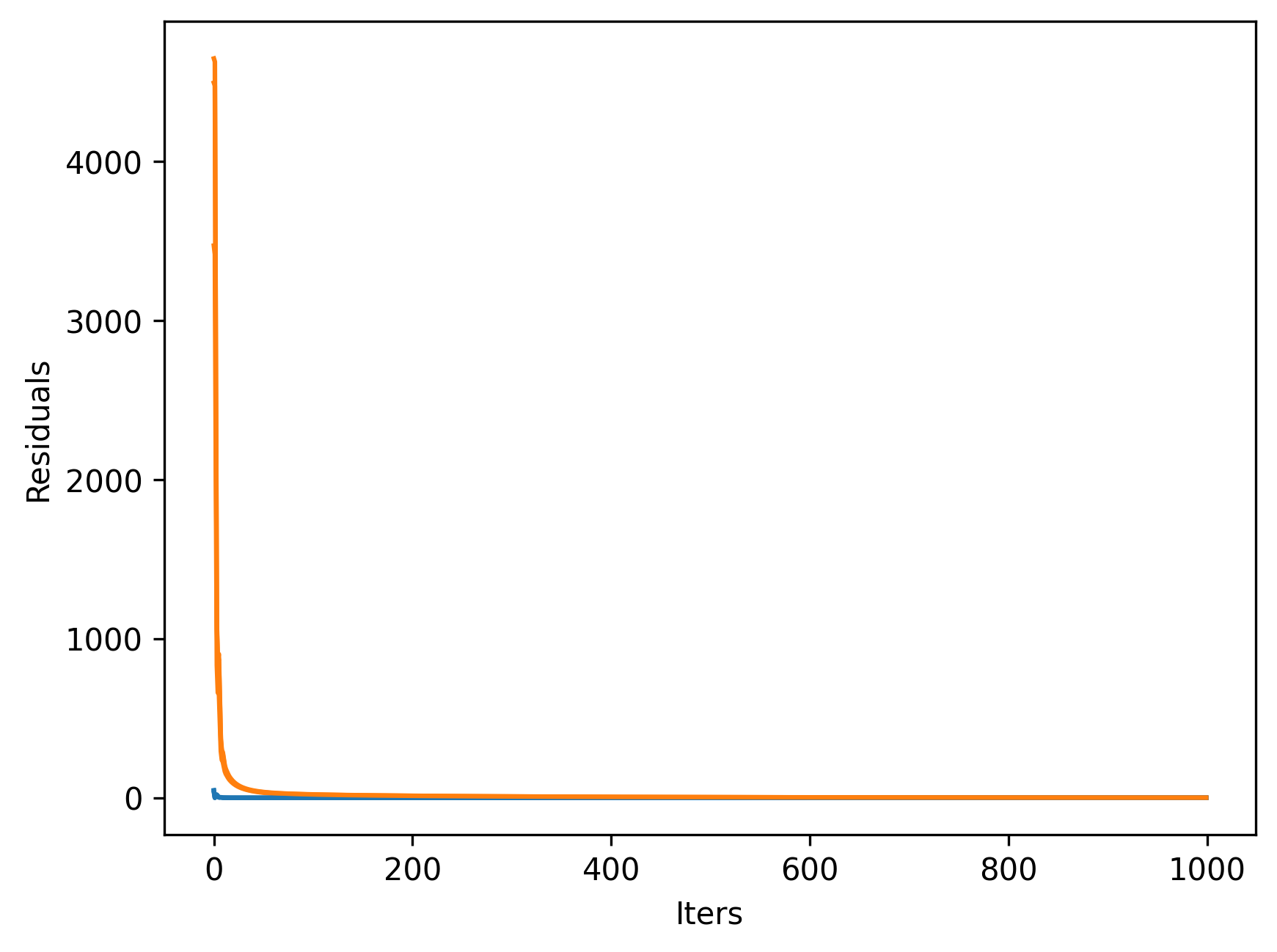}}\hfill\subfloat[$\vx^0=\bm{0}$.]{\includegraphics[width=\factor\textwidth]{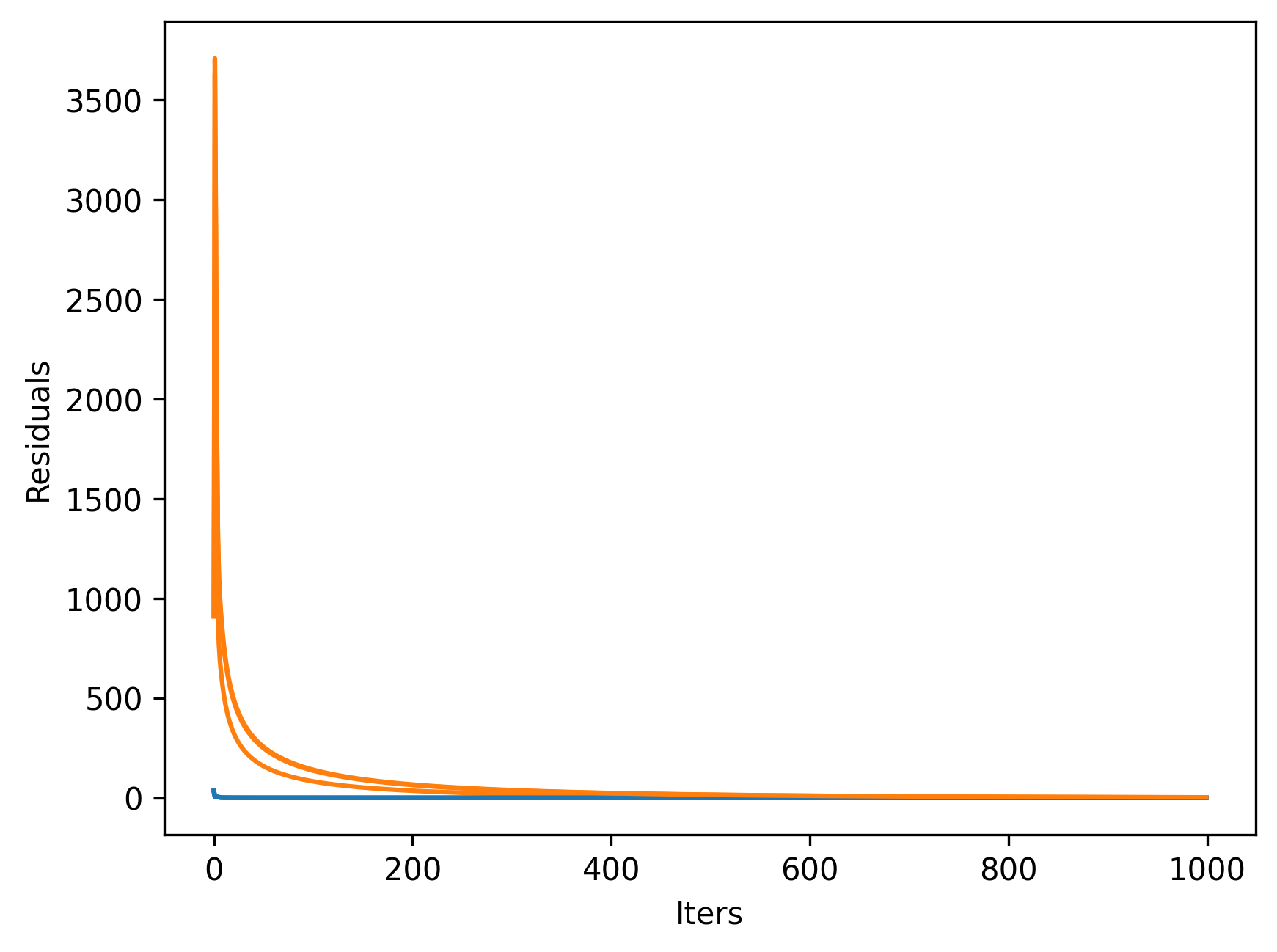}}\hfill\subfloat[$\vx^0$ randomly set.]{\includegraphics[width=\factor\textwidth]{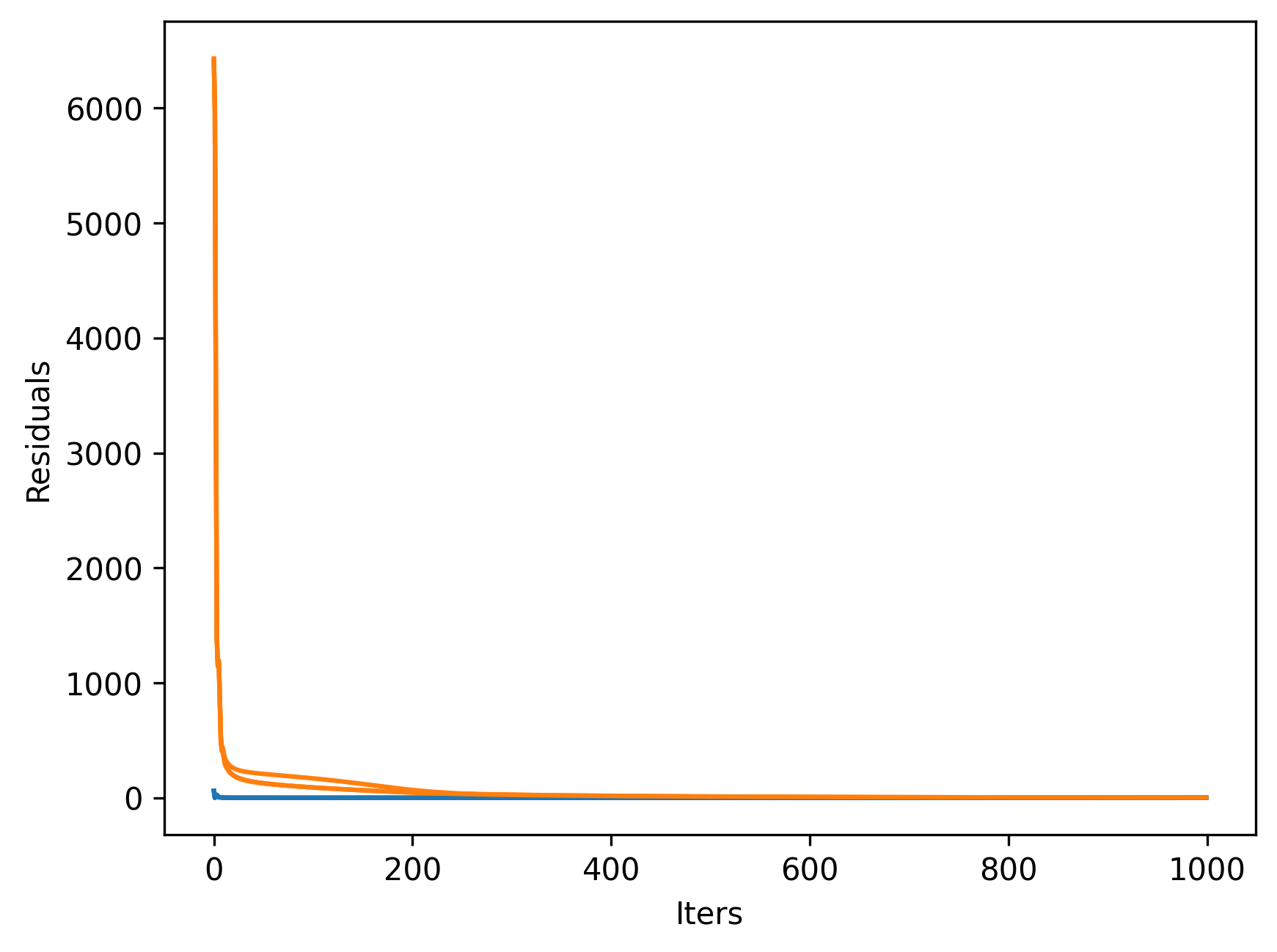}}
	\caption{Convergence assessment. The proposed algorithm  achieves a stationary regime, with both residuals numerically close to 0, when all the parameters are set in order to satisfy a FNE regime for the denoiser. Both primal and dual residual stabilize early among the iterations.\label{fig:conv}}
\end{figure}
The number of iterations is set to 400 since, under the analysis done in \cref{ssec:valDen} and the theoretical results, the denoiser is such that the method converges: \cref{fig:conv} shows the norm of the primal  and dual residuals, namely $r^k, s^k$,  among 1000 iterations:
$$
r^k = \|\vM\vx^k-\vw^k\|_2,\qquad \vs^k = \|\frac{1}{\gamma}\vM^\top(\vwkk-\vwk)\|_2.
$$
The three panels show that even for different initialisations, namely the perturbed data, a zero image and a random image with elements chosen uniformly in $[0,1]$, the algorithm reaches a stationary regime: 400 iterations are sufficient for both residuals to reach the stationary regime observed in the experiment.

\subsection{Hyperparameter Analysis}
\label{ssec:hypAnalysis}
\cref{al:gsdsplit+} depends mainly on two parameters: the ADMM step $\gamma$ and the denoiser's relaxation parameter $\alpha$. The analysis of the algorithm behaviour with respect to these two parameters is performed via two distinct grid searches on 20 $256\times256$ patches extracted from 20 images (one patch per image) of the DIV2K test dataset, which differ from the ones used for the validation in  \cref{ssec:valDen}. 

For finding an (empirical) optimal value for $\gamma$ the first grid search is performed in the interval $[0.01,0.2]$ with a step of $0.01$. For each value for $\gamma$ the average PSNR and its standard deviation are considered across the 20 images. The results are presented in \cref{tab:gammaPSNR}: the value that provides the highest average PSNR on this dataset is 0.03. It is worth to point out that there is a set of values, namely between 0.01 and 0.07, that achieve reliable results, while for larger values the performance is worsening but it is still bounded.
\begin{table}[htbp]
	\tbl{Average PSNR and standard deviation obtained for different values
		of the ADMM parameter $\gamma$, with $\alpha=0.9$. The largest
		average PSNR is attained for $\gamma=0.03$.}
		{\begin{tabular}{c|c||c|c||c|c||c|c}
				\toprule
			$\gamma$ & PSNR& $\gamma$ & PSNR& $\gamma$ & PSNR& $\gamma$ & PSNR\\
			\midrule
0.01 & 22.66 $\pm$ 1.96 & 0.06 & 23.41 $\pm$ 1.58& 0.11 & 22.75 $\pm$ 1.35&0.16 & 22.23 $\pm$ 1.23\\
0.02 & 23.65 $\pm$ 1.98 & 0.07 & 23.26 $\pm$ 1.51& 0.12 & 22.64 $\pm$ 1.32&0.17 & 22.13 $\pm$ 1.22\\
0.03 & 23.77 $\pm$ 1.87 & 0.08 & 23.13 $\pm$ 1.46& 0.13 & 22.53 $\pm$ 1.29&0.18 & 22.04 $\pm$ 1.20\\
0.04 & 23.69 $\pm$ 1.75 & 0.09 & 22.99 $\pm$ 1.41& 0.14 & 22.42 $\pm$ 1.27&0.19 & 21.96 $\pm$ 1.19\\
0.05 & 23.56 $\pm$ 1.66 & 0.10 & 22.86 $\pm$ 1.37& 0.15 & 22.32 $\pm$ 1.25&0.20 & 21.88 $\pm$ 1.17\\
		\end{tabular}}
		\label{tab:gammaPSNR} 
\end{table}
\begin{remark}
	A fixed value of $\gamma$ is adopted throughout this work. Adaptive
	strategies, such as the one employed in \cite{benfenati2025plug}, are left
	for future investigation.
\end{remark}
A sensitivity analysis is performed with respect to the relaxation parameter $\alpha$. The denoiser has been trained for $\alpha=1$, and the validation shown in \cref{ssec:valDen} proved that the denoiser satisfied a local $L$--Lipschitz empirical property, on subsets of different validation sets, with an estimate $\hat L\sim1.06$, with a maximum value of 1.07. Lemma~\ref{lem:FNE} states that as long as $\alpha\leq L^{-1}$, the denoiser is firmly non expansive. In the previous assessment, $\alpha$ has been set to 0.9 for being in a FNE regime. Nonetheless, since PnP methods showed to achieve remarkable results even in absence of convergence properties, the performance of \gsdsplit is evaluated for  $\alpha\in\{0.1,0.2,\dots,1.5\}$: this set encompasses values for $\alpha$ that satisfy the FNE empirical regime and  values that does not ensure it. \cref{tab:alphaPSNR} considers as a merit measure the average PSNR on the same subset of images employed for the grid search for $\gamma$.
\begin{table}[htbp]
	\tbl{Average PSNR and standard deviation obtained for different values
		of the relaxation parameter $\alpha$, with $\gamma=0.03$.}
		{\begin{tabular}{c|c||c|c||c|c}
			$\alpha$ & PSNR& $\alpha$ & PSNR& $\alpha$ & PSNR\\
			\midrule
			0.1 & 21.23 $\pm$ 1.03 & 0.6 & 23.61 $\pm$ 1.71& 1.1 & 23.75 $\pm$ 1.95\\
			0.2 & 22.37 $\pm$ 1.24 & 0.7 & 23.70 $\pm$ 1.78& 1.2 & 23.71 $\pm$ 1.97\\
			0.3 & 22.93 $\pm$ 1.40 & 0.8 & 23.75 $\pm$ 1.84& 1.3 & 23.67 $\pm$ 1.99\\
			0.4 & 23.26 $\pm$ 1.52 & 0.9 & 23.77 $\pm$ 1.88& 1.4 & 23.60 $\pm$ 2.00\\
			0.5 & 23.47 $\pm$ 1.63 & 1.0 & 23.77 $\pm$ 1.92& 1.5 & 23.53 $\pm$ 2.00\\
		\end{tabular}}
		\label{tab:alphaPSNR}
\end{table}  
\Cref{tab:alphaPSNR} shows that values around $\alpha=0.9$ provides better average reconstruction. It is evident, anyway, that even values outside the empirically supported FNE-compatible range, namely $\alpha=1$ and larger, provides reliable reconstructions, on average. The results show a broad performance plateau around $\alpha=0.9$. In particular, the theoretically motivated choice $\alpha=0.9$ achieves one of the largest average PSNR values. Values $\alpha\geq1$, which lie outside the empirically supported range, still produce numerically stable reconstructions in this experiment. However, the sufficient condition used in the convergence analysis is not supported for these values, and no convergence claim is made in this case.

As a further check, the quantity 
$$
m_k(\alpha)
=
\langle \Delta z^k,\Delta q^k\rangle
-
\alpha\|\Delta q^k\|_2^2
$$
is computed, where 
$$
z^k = x^{k+1}+\lambda_2^k,
\qquad
\Delta z^k=z^k-z^{k-1}, \quad 
\Delta q^k
=
\nabla\psi_\vartheta(z^k)
-
\nabla\psi_\vartheta(z^{k-1}).
$$ 
Since $D_{\vartheta,\alpha}=\Id-\alpha\nabla\psi_\vartheta$, the condition $m_k(\alpha)\geq0$ is equivalent to the FNE inequality in Definition~\ref{def:FNE} on the considered pair. \cref{fig:checkFNE} depicts four different choices for $\alpha$: for $\alpha=0.1$ and $\alpha=0.9$, the sampled quantities remain nonnegative among the iterations. For $\alpha=1.2$ and $\alpha=1.5$, negative $m_k$ is observed during the first iterations, and  it becomes nonnegative at later iterations. This experiment only assesses the FNE inequality on consecutive inputs encountered along the iterations and does not constitute a global firm-nonexpansiveness property.
\begin{figure}[htbp]
	\centering
	\newcommand{\factor}{0.22}
	\subfloat[$\alpha=0.1$]{\includegraphics[width=\factor\textwidth]{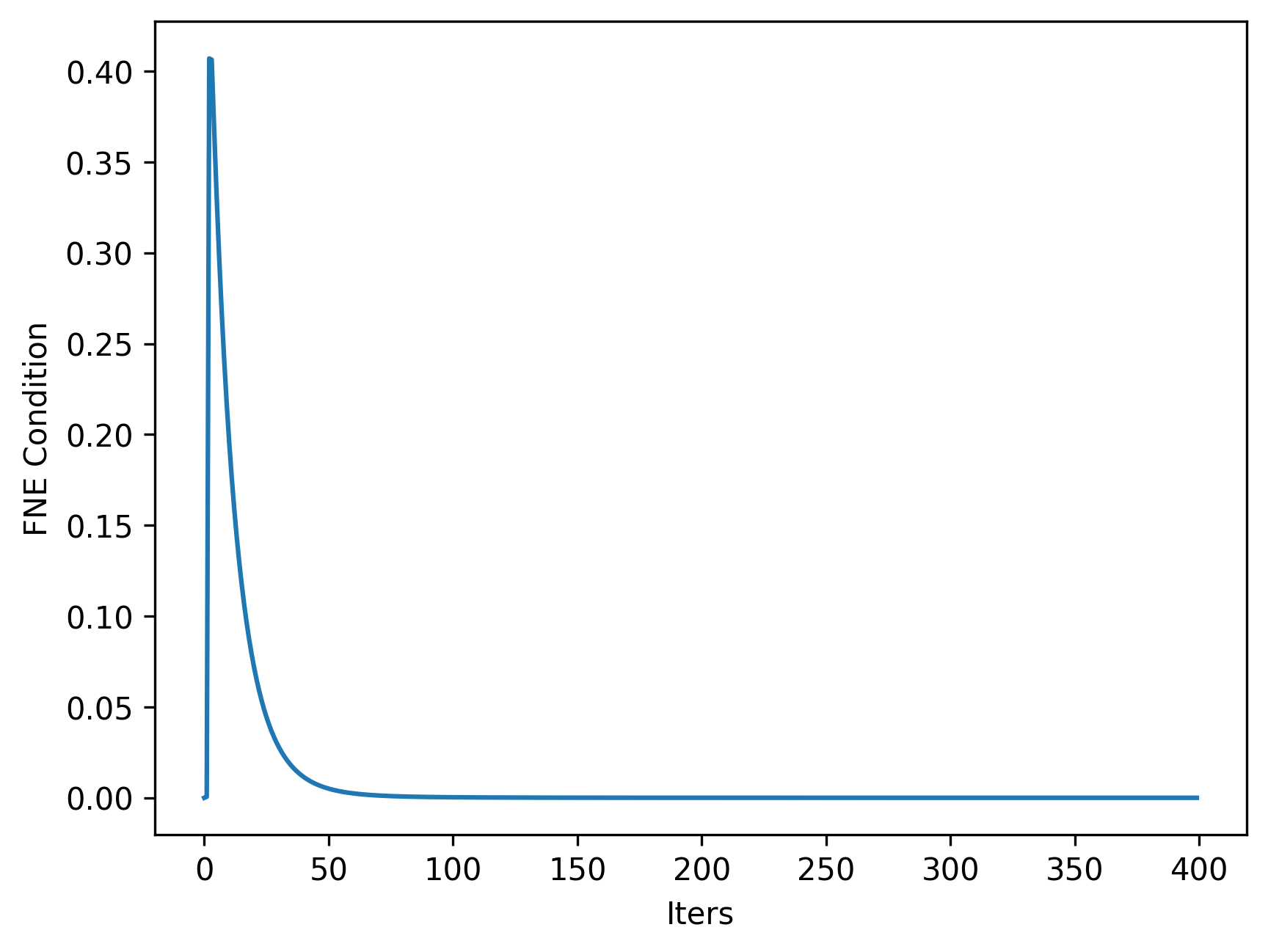}}\hfill
	\subfloat[$\alpha=0.9$]{\includegraphics[width=\factor\textwidth]{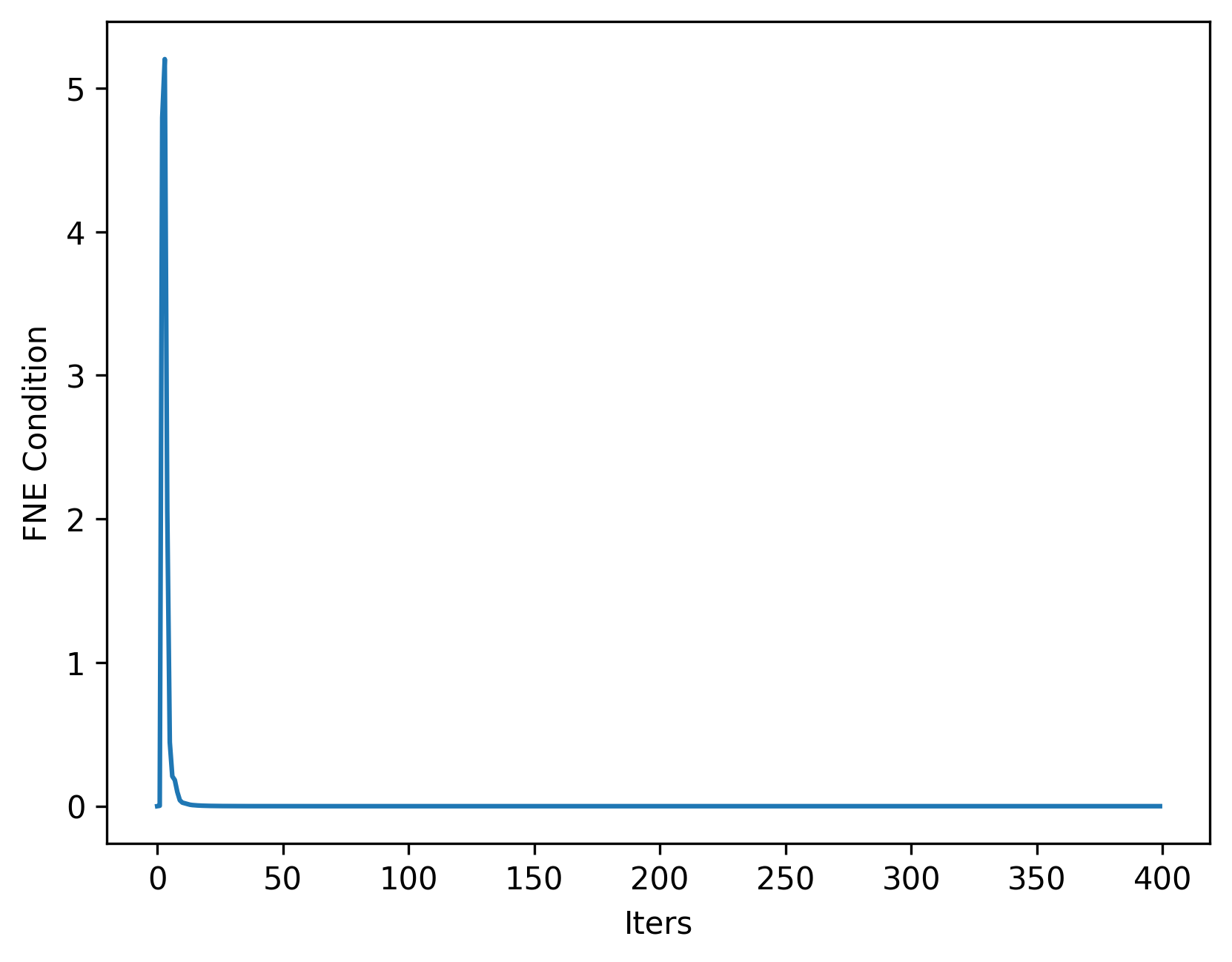}}\hfill
	\subfloat[$\alpha=1.2$]{\includegraphics[width=\factor\textwidth]{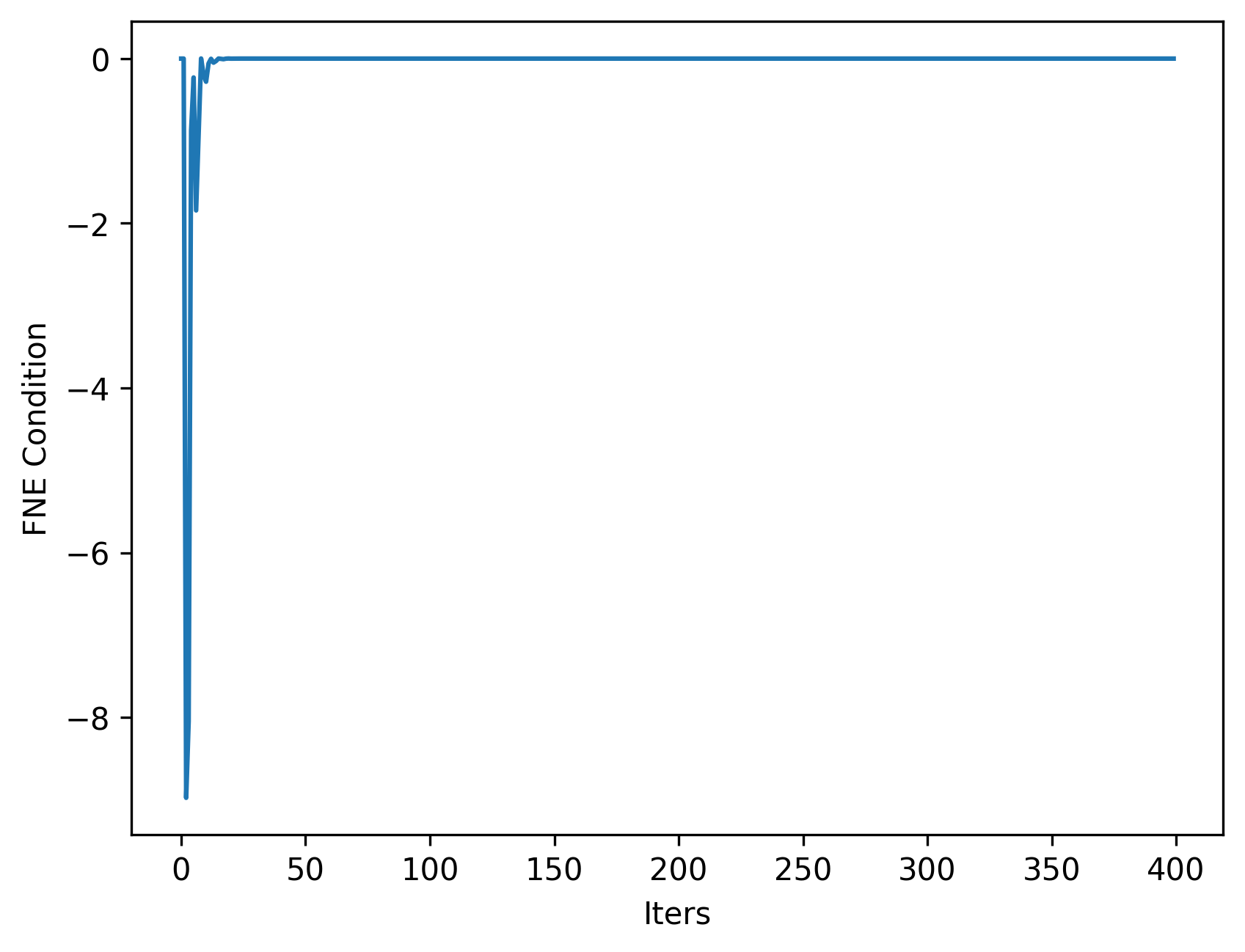}}\hfill
	\subfloat[$\alpha=1.5$]{\includegraphics[width=\factor\textwidth]{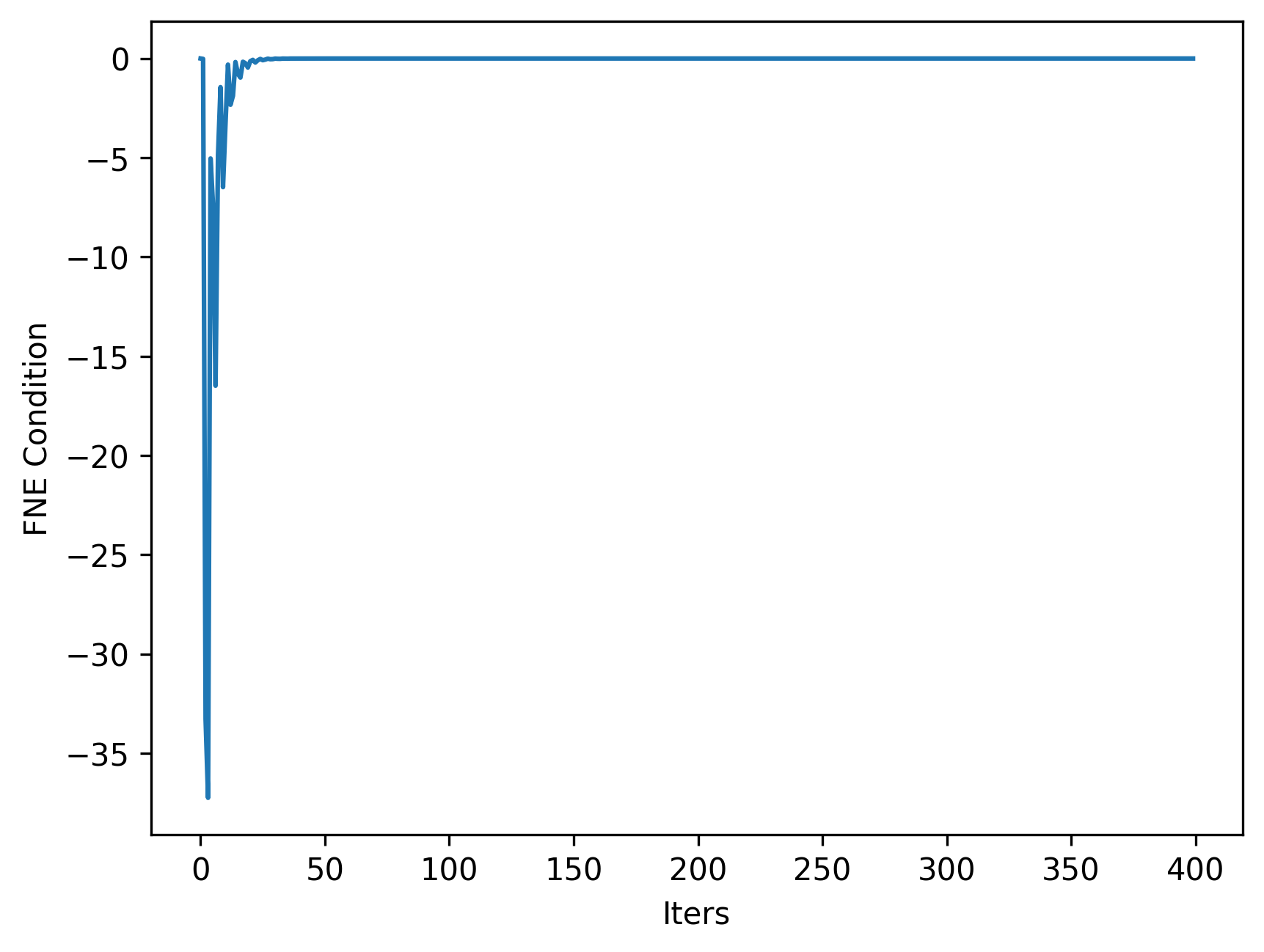}}
	\caption{Check on firmly nonexpansiveness of the denoiser for varying values of $\alpha$. Even when the relaxation parameter is chosen outside the range that theoretically ensures this property, after some iteration the condition is nonetheless met. \label{fig:checkFNE}}
\end{figure}
\subsection{Comparison}
This section compares the proposed \cref{al:gsdsplit+} with the \pnpsplit \cite{benfenati2025plug}, from which stems out. The comparison is performed under this settings:
\begin{itemize}
	\item PnpSplit+: the inner denoiser is the CNN presented in \cite{doi:10.1137/20M1387961}, $\gamma$ is fixed among the iterations at 0.01. 
	\item \gsdsplit: the inner denoiser is based on the ICNN structure introduced in \cref{ssec:training}, $\gamma$ is set to 0.03 and $\alpha=0.9$.
\end{itemize}
For both methods the maximum number of iterations is set to 400. The comparison is done with the optimal value for $\gamma$: for \pnpsplit it is set to 0.01 since it is related to the standard deviation of the denoiser, while for \gsdsplit is the optimal value emerged from the numerical test of \cref{ssec:hypAnalysis}.

The left panel of \cref{fig:plotBestGamma} shows the performances in terms of PSNR of the algorithms with respect to 10 $256\times 256$ patches taken from images belonging to the BSD500 dataset. The right panel of \cref{fig:plotBestGamma} presents, instead, the performances of the algorithms on the same set of images when the values of $\gamma$ are overestimated. For each image, 3 noise simulations are run and the results are collected: the plots in \cref{fig:plotBestGamma} presents the average PSNR for each image and the faint, small shadow represents the standard deviation among the 3 different runs.
\begin{figure}[htbp]
	\centering
	\subfloat[Optimal $\gamma$s.]{\includegraphics[width=0.45\textwidth]{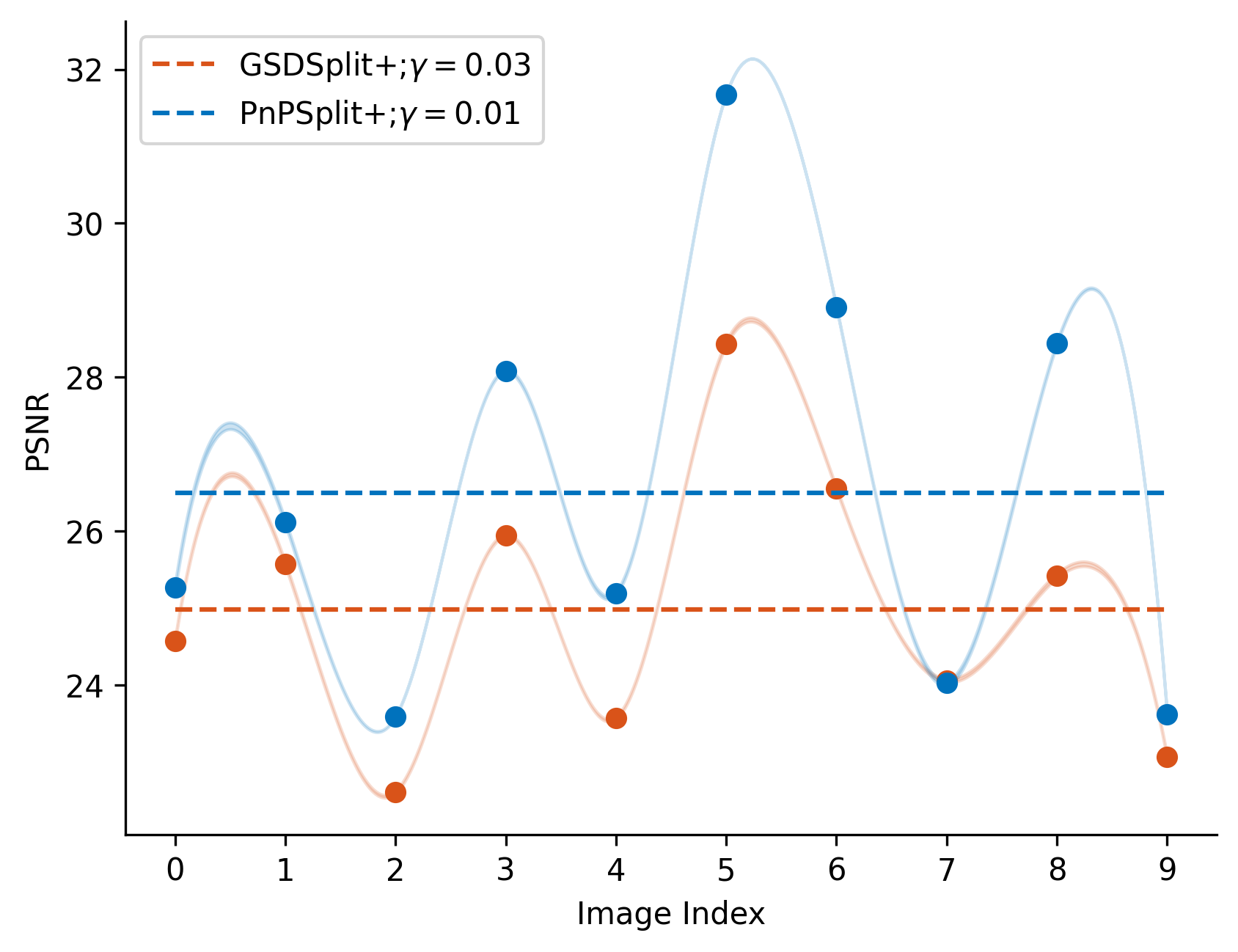}}\hfill
	\subfloat[Overestimated $\gamma$s.]{\includegraphics[width=0.45\textwidth]{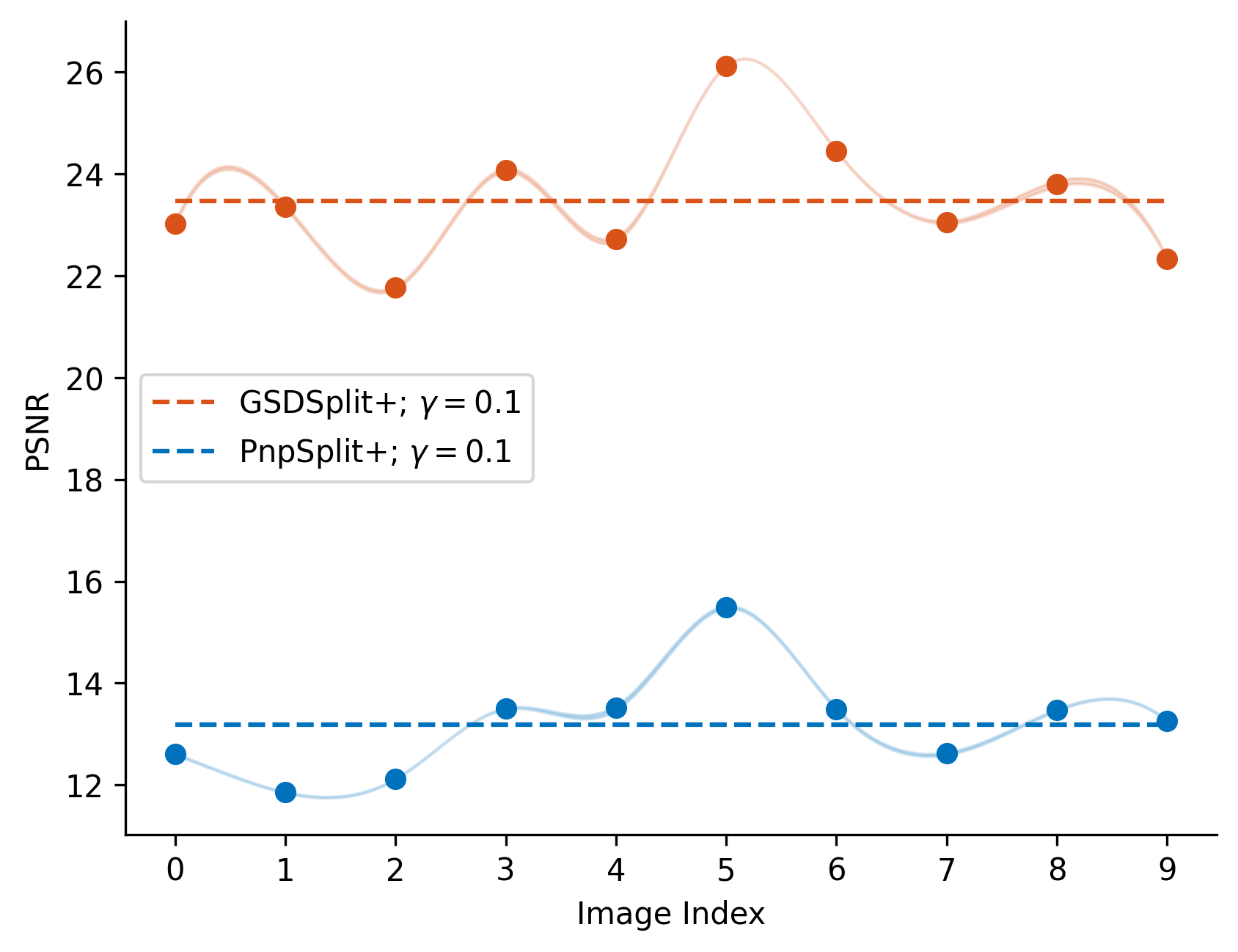}}
	\caption{Left Panel: PSNR of \gsdsplit (orange) and \pnpsplit (blue) when both algorithms are set with the optimal $\gamma$. Right Panel: PSNR of \gsdsplit (orange) and \pnpsplit (blue) when $\gamma$ is overestimated. \label{fig:plotBestGamma}}
\end{figure}
When each method is run with its own optimal value for $\gamma$, \pnpsplit  provides better performances in terms of PSNR. However, the right panel of \cref{fig:plotBestGamma} shows that \gsdsplit suffers a smaller loss, in terms of PSNR, when $\gamma$ is suboptimal. \cref{tab:indexes} shows the comparison of the two algorithms with respect to PSNR, SSIM and relative error.
\begin{table}
	\tbl{Average reconstruction quality. For each image, the metric is first averaged over three independent Poisson
		noise realizations. The reported standard deviation is computed across the
		10 image-wise averages. The first two rows use the optimal
		values of $\gamma$, whereas the last two rows use the overestimated ones.}
	{\begin{tabular}{l|c|ccc}
			\toprule
		Method & $\gamma$ & PSNR $\uparrow$ & SSIM $\uparrow$ & RE $\downarrow$\\
		\toprule
		\pnpsplit & $0.01$ & $26.48\pm2.55$ & $0.69\pm0.12$ & $0.12\pm0.02$\\
		\gsdsplit & $0.03$ & $24.97\pm1.67$ & $0.55\pm0.06$ & $0.14\pm0.02$\\
		\midrule
		\pnpsplit & $0.10$ & $13.19\pm0.96$ & $0.12\pm0.05$ & $0.53\pm0.07$\\
		\gsdsplit & $0.10$ & $23.47\pm1.16$ & $0.45\pm0.09$ & $0.16\pm0.01$
	\end{tabular}}
	\label{tab:indexes}
\end{table}
When each method uses its optimal value of $\gamma$, \pnpsplit achieves a higher average PSNR and SSIM, while the two methods attain comparable relative errors. In particular, the average PSNR difference is approximately $1.51$.

The experiment with overestimated values for $\gamma$ reveals an evident different sensitivity to such parameter. The average PSNR of \pnpsplit decreases by approximately $13.29$, whereas the decrease observed for \gsdsplit is approximately $1.50$. SSIM and relative error have the same behaviour: \pnpsplit performances decreases, whilst \gsdsplit instead suffer less from such choice. On the considered test set, \gsdsplit is substantially less sensitive to a suboptimal ADMM parameter.

This behaviour is confirmed by the plots on \cref{fig:gammas}, where PSNR, SSIM, and relative error are evaluated over the full grid $\gamma\in\{0.01,0.02,\ldots,0.20\}$. On the considered image set, \gsdsplit has a less pronounced decreased behaviour, and it shows less sensitivity to the ADMM parameter. 
\begin{figure}[htbp]
	\centering
	\subfloat{\includegraphics[width=0.31\textwidth]{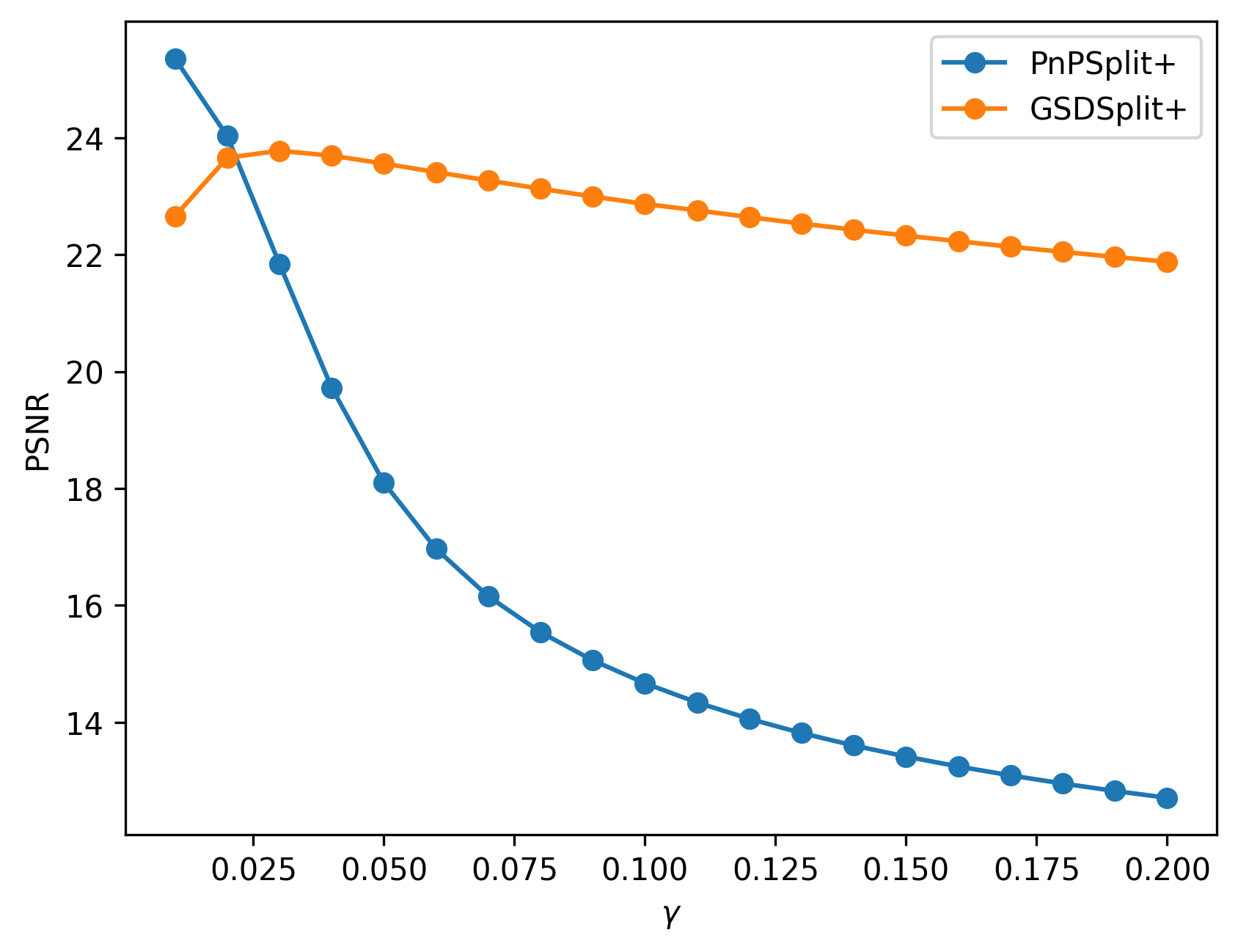}}\hfill	\subfloat{\includegraphics[width=0.31\textwidth]{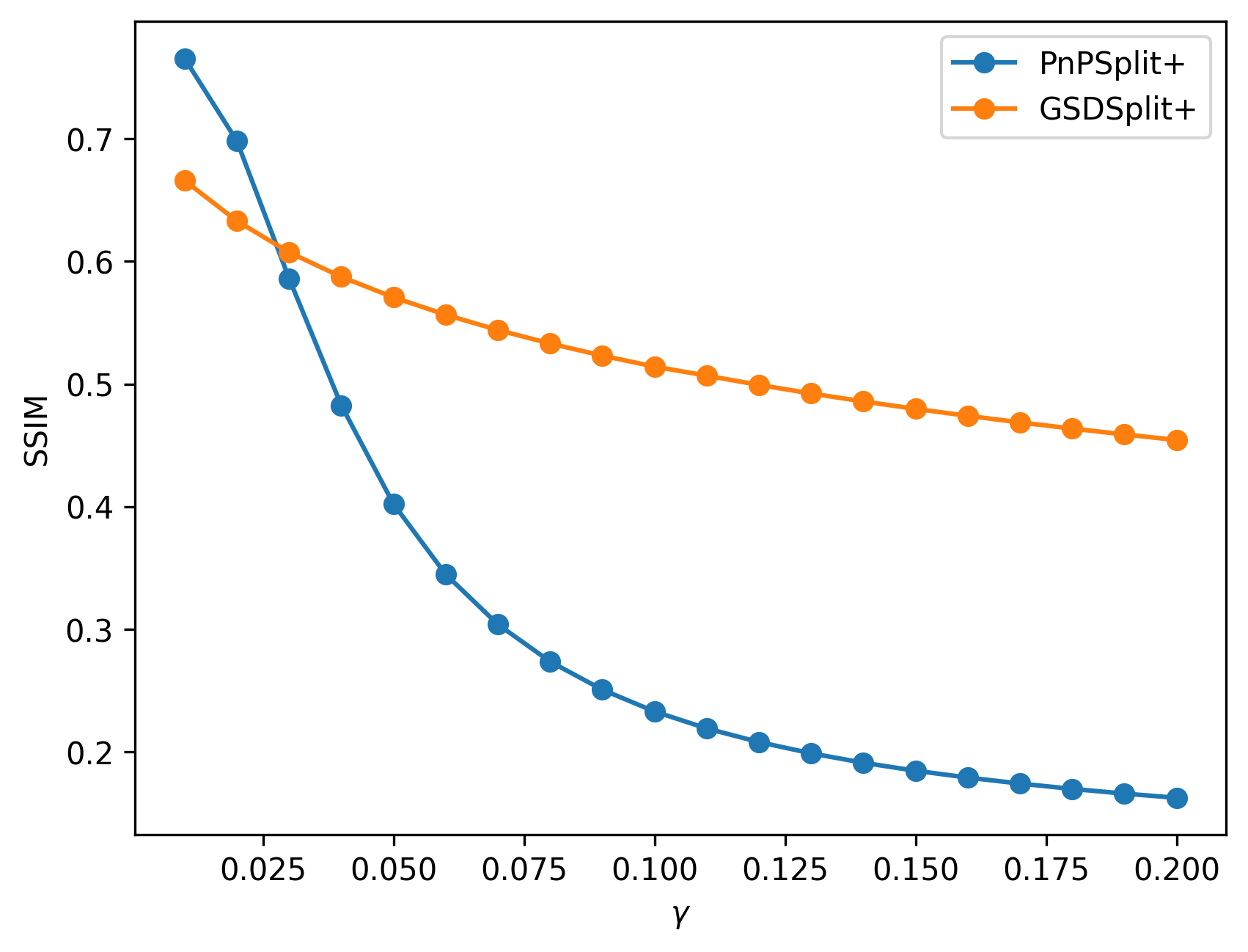}}\hfill	\subfloat{\includegraphics[width=0.31\textwidth]{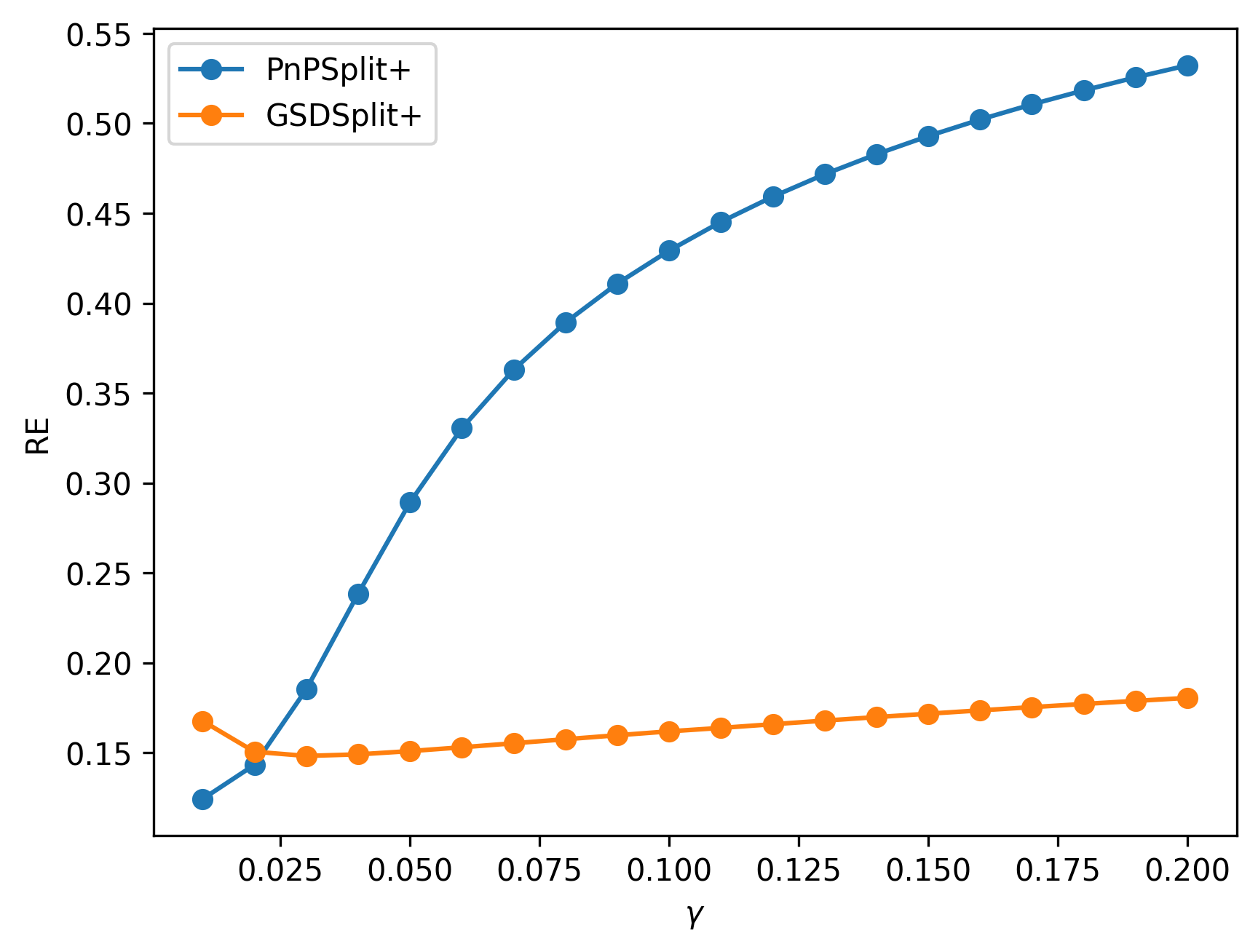}}
	\caption{Left panel: average PSNR. Central panel: average SSIM. Right panel: average RE. Each point of the plot refers to the average of the measure index on the 10 images. The GSD approach suffers less from using a suboptimal value for $\gamma$.\label{fig:gammas}}
\end{figure}
\cref{fig:vis} presents the visual inspections of several reconstruction related to the above experiments: the only difference is that the experiments are performed on the whole image and not only on a smaller patch. When both algorithms are set with the optimal parameters, the reconstruction is reliable, considering the severity of th degradation due to the blur operator and to the high level of noise. The first row of \cref{fig:vis} shows the comparison of the reconstruction when $\gamma$ is set for \gsdsplit and not for PnpSplit+, while the second row shows the opposite case. 
\begin{figure}[htbp]
	\newcommand{\factor}{0.22}
	\centering
\subfloat[$\vx^\star$]{\begin{tikzpicture}[spy using outlines={rectangle, magnification=3, size=0.07\textwidth, connect spies}]
		\node {\includegraphics[width=\factor\textwidth]{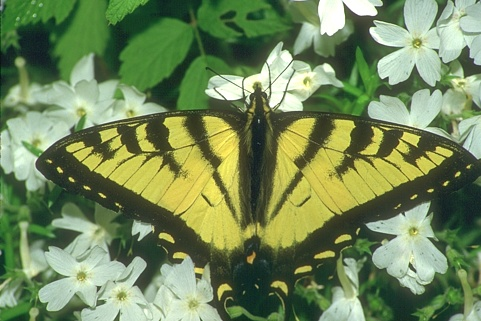}};
		\spy [white, thick] on (-0.5,0.3) in node  at (0.06\textwidth,0.45);
\end{tikzpicture}}\hfill\subfloat[$\vg$]{\begin{tikzpicture}[spy using outlines={rectangle, magnification=3, size=0.07\textwidth, connect spies}]
		\node {\includegraphics[width=\factor\textwidth]{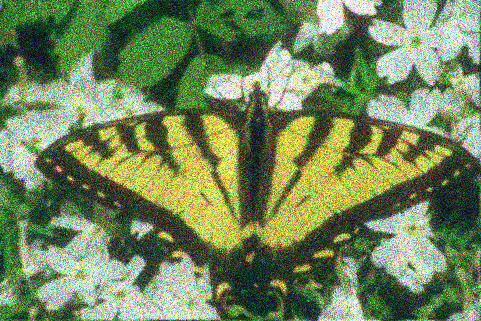}};
		\spy [white, thick] on (-0.5,0.3) in node  at (0.06\textwidth,0.45);
\end{tikzpicture}}\hfill\subfloat[\gsdsplit, $\gamma:0.03$]{\begin{tikzpicture}[spy using outlines={rectangle, magnification=3, size=0.07\textwidth, connect spies}]
		\node {\includegraphics[width=\factor\textwidth]{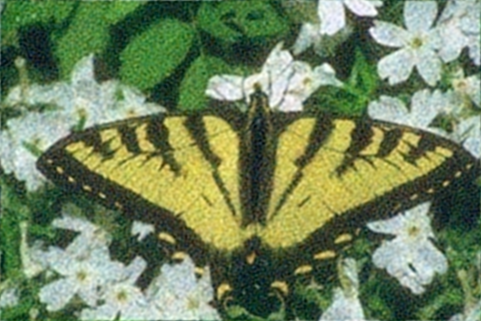}};
		\spy [white, thick] on (-0.5,0.3) in node  at (0.06\textwidth,0.45);
\end{tikzpicture}}\subfloat[\pnpsplit, $\gamma:0.10$]{\begin{tikzpicture}[spy using outlines={rectangle, magnification=3, size=0.07\textwidth, connect spies}]
		\node {\includegraphics[width=\factor\textwidth]{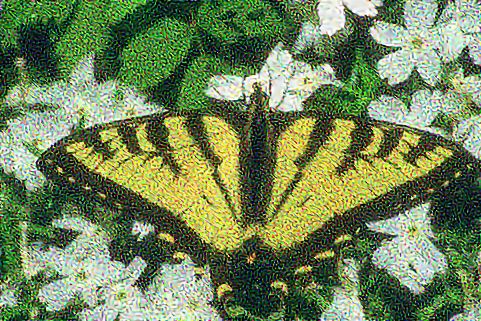}};
		\spy [white, thick] on (-0.5,0.3) in node  at (0.06\textwidth,0.45);
\end{tikzpicture}}

\subfloat[$\vx^\star$]{\begin{tikzpicture}[spy using outlines={rectangle, magnification=3, size=0.07\textwidth, connect spies}]
		\node {\includegraphics[width=\factor\textwidth]{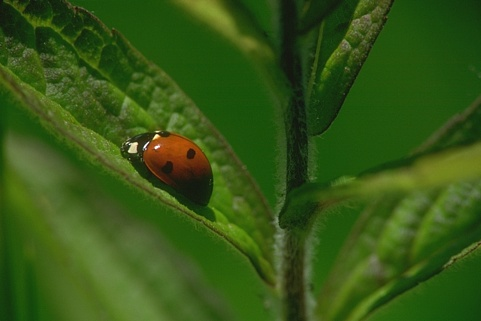}};
		\spy [white, thick] on (-0.5,-0.0) in node  at (0.06\textwidth,0.45);
\end{tikzpicture}}\hfill\subfloat[$\vg$]{\begin{tikzpicture}[spy using outlines={rectangle, magnification=3, size=0.07\textwidth, connect spies}]
		\node {\includegraphics[width=\factor\textwidth]{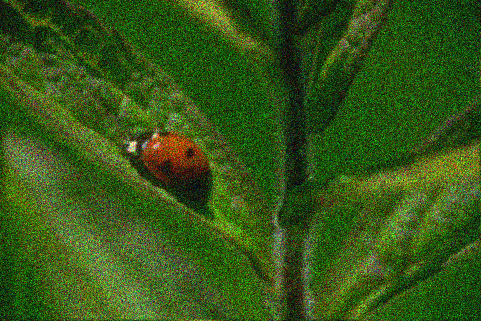}};
		\spy [white, thick] on (-0.5,0) in node  at (0.06\textwidth,0.45);
\end{tikzpicture}}\hfill\subfloat[\gsdsplit, $\gamma:0.10$]{\begin{tikzpicture}[spy using outlines={rectangle, magnification=3, size=0.07\textwidth, connect spies}]
		\node {\includegraphics[width=\factor\textwidth]{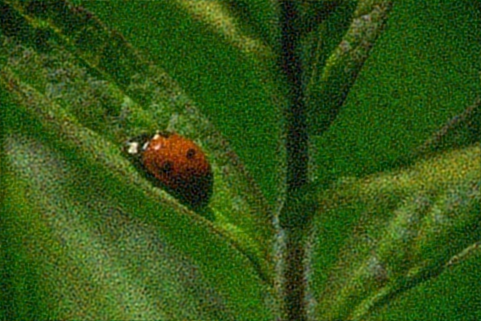}};
		\spy [white, thick] on (-0.5,0) in node  at (0.06\textwidth,0.45);
\end{tikzpicture}}\subfloat[\pnpsplit, $\gamma:0.01$]{\begin{tikzpicture}[spy using outlines={rectangle, magnification=3, size=0.07\textwidth, connect spies}]
		\node {\includegraphics[width=\factor\textwidth]{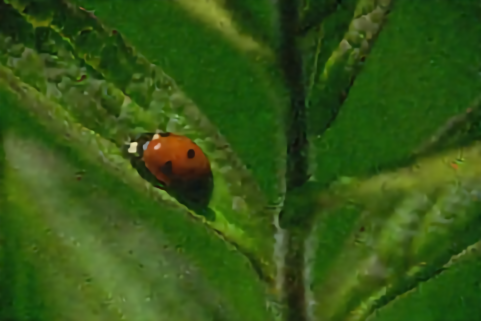}};
		\spy [white, thick] on (-0.5,0) in node  at (0.06\textwidth,0.45);
\end{tikzpicture}}
	\caption{Visual inspection of the reconstructions provided by \gsdsplit and \pnpsplit. The first column depicts the clean image, the second one the perturbed data, the third and the fourth columns show the \gsdsplit and \pnpsplit reconstructions.  \label{fig:vis}}
\end{figure}

\subsection{Computational time and cost}

The computational cost for the creation of the denoiser consists in the training phases. In this work, the strategy relied on 3 different steps of training: firstly, on only on noise level, then on levels randomly drawn from $[0.01,0.05]$ and the last step aimed to force the Lipschitz condition on the function $\psi_\vartheta$. 

The main computational cost for the inference, instead, is related to the application of the denoiser $D_\vartheta$: it amounts to compute the gradient of a convex function, performed with \texttt{autograd} routine in pytorch. The deblurring step involving the inversion of $\vH^\top\vH + 2\Id$, due to the hypothesis on $\vH$ and to the fact it is a blurring operator, can be done via FFT. 

The total computational time for a image reconstruction task depends mainly on the dimension of the image to recover. \cref{tab:compTime} shows these time measurements on the machine used for the experiments. Each run has been performed 5 times, the lowest and the highest time are discarded and the average is considered. As expected, \gsdsplit is outperformed by \pnpsplit: the latter employs a deep CNN, performing several hundreds of convolution operations, but the former has to calculate at each iteration the gradient of a ICNN. There is no free lunch: the robustness to the ADMM parameter $\gamma$ is counterbalanced by the higher computational cost. 
\begin{table}[htbp]
	\tbl{Computational time (in seconds) for different patch sizes and different number of iterations. The last line shows the average computational time per iteration. The patch dimension it is the main contribution to the running time. Each time is obtained by averaging 3 runs, chosen among 5 runs where the smallest and the largest measurements were discarded.}
	{\begin{tabular}{l|rrr||rrr}
			\toprule
			&\multicolumn{3}{c||}{\gsdsplit} & \multicolumn{3}{c}{\pnpsplit}\\
			\cmidrule{2-7}
			& \multicolumn{6}{c}{Patch Dimension}\\
			
			N. It	& 64 	& 128 	& 256 	& 64 	& 128 	& 256 \\ 
			100		& 2.0	& 2.0 	& 3.0	& $<$1.0  & $<$1.0	& 1.3 \\
			400 	& 8.0	& 8.0	& 15.0	& 2.0 	& 3.0	& 7.0\\
			1000 	& 19.6	& 22.3	& 38.3	& 6.0 	& 8.0	& 16.0\\
			\midrule
			s/iter 	& 0.019 & 0.025 & 0.035  & 0.004 & 0.005 &0.016 \\
	\end{tabular}}
	\label{tab:compTime}
\end{table} 

\subsection{Tests on different blur operators and noise levels}

This section is devoted to assess the performance in different scenarios, encompassing other type of linear degradation operators such as motion blur and out-of-focus. Another test is done on an image corrupted by an increasing level of Poisson noise.

\cref{fig:blurs} shows the results achieved when the operator $\vH$ is different from the Gaussian one employed in previous sections and the level of Poisson noise is high. The first row of \cref{fig:blurs} presents an image perturbed by a the same Gaussian blur of the previous experiment, but when the Poisson noise level is set to 5. In the second row the image is perturbed by linear motion blur, with a length of 21 pixels and a slope of 35 degrees. The third row uses an out-of-focus blur with radius 5. The perturbed data $\vg$ in the last two rows are affected by Poisson noise with level equal to 20, as in the experiments of previous sections. 

All the experiments are performed setting $\gamma=0.03$, $\alpha=0.9$ and 400 iterations. \cref{fig:blurs} shows that the proposed method \gsdsplit provide reliable results in different regimes, with several kind of linear blur operators and under high level of Poisson noise. 
\begin{figure}[htbp]
	\newcommand{\factor}{0.3}
	\centering
	
	\subfloat[$\vx^\star$]{\begin{tikzpicture}[spy using outlines={rectangle, magnification=3, size=0.1\textwidth, connect spies}]
			\node {\includegraphics[width=\factor\textwidth]{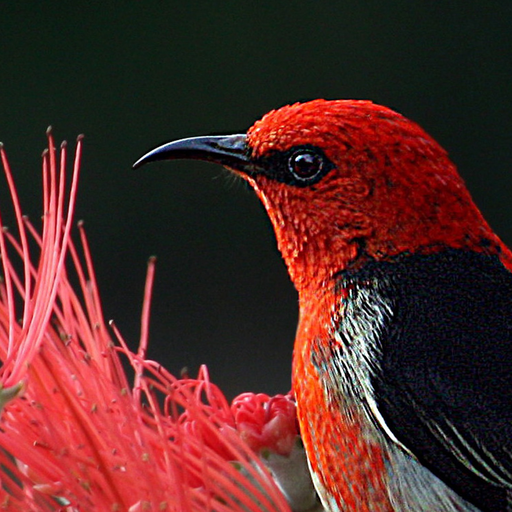}};
			\spy [white, thick] on (0.4,0.8) in node  at (-0.095\textwidth,1.35);
	\end{tikzpicture}}\hfill\subfloat[$\vg$; High Noise.]{\begin{tikzpicture}[spy using outlines={rectangle, magnification=3, size=0.1\textwidth, connect spies}]
			\node {\includegraphics[width=\factor\textwidth]{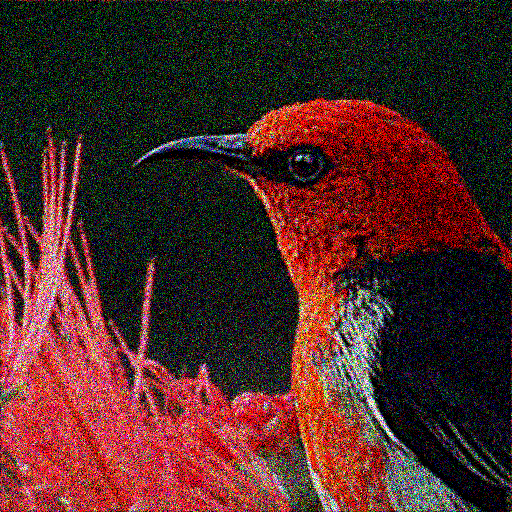}};
			\spy [white, thick] on (0.4,0.8) in node  at (-0.095\textwidth,1.35);
	\end{tikzpicture}}\hfill\subfloat[PSNR:21.58]{\begin{tikzpicture}[spy using outlines={rectangle, magnification=3, size=0.1\textwidth, connect spies}]
			\node {\includegraphics[width=\factor\textwidth]{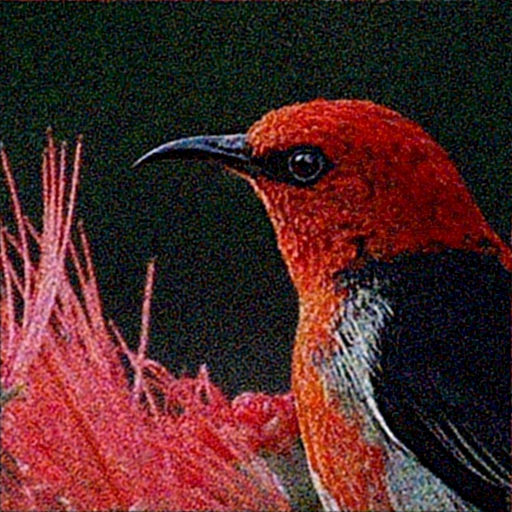}};
			\spy [white, thick] on (0.4,0.8) in node  at (-0.095\textwidth,1.35);\end{tikzpicture}}
	
	\subfloat[$\vx^\star$]{\begin{tikzpicture}[spy using outlines={rectangle, magnification=3, size=0.1\textwidth, connect spies}]
			\node {\includegraphics[width=\factor\textwidth]{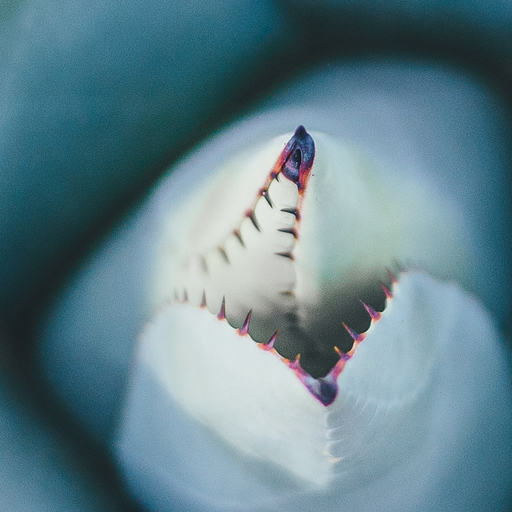}};
			\spy [black, thick] on (0.3,0.3) in node  at (-0.095\textwidth,1.35);
	\end{tikzpicture}}\hfill\subfloat[$\vg$; Motion Blur]{\begin{tikzpicture}[spy using outlines={rectangle, magnification=3, size=0.1\textwidth, connect spies}]
			\node {\includegraphics[width=\factor\textwidth]{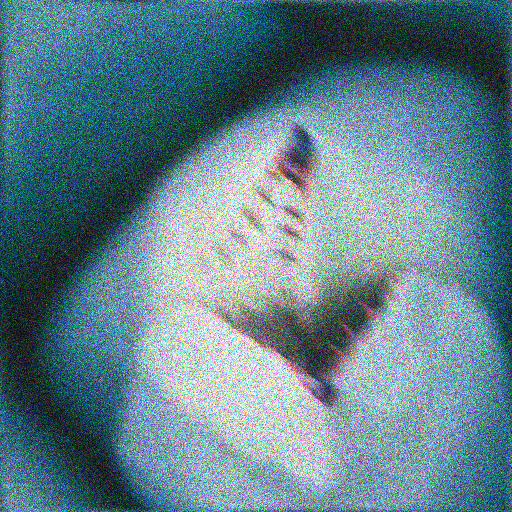}};
			\spy [black, thick] on (0.3,0.3) in node  at (-0.095\textwidth,1.35);
	\end{tikzpicture}}\hfill\subfloat[PSNR:26.34]{\begin{tikzpicture}[spy using outlines={rectangle, magnification=3, size=0.1\textwidth, connect spies}]
			\node {\includegraphics[width=\factor\textwidth]{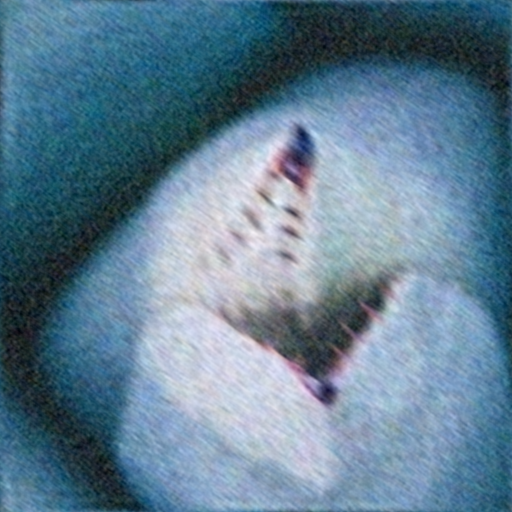}};
			\spy [black, thick] on (0.3,0.3) in node  at (-0.095\textwidth,1.35);
	\end{tikzpicture}}
	
	\subfloat[$\vx^\star$]{\begin{tikzpicture}[spy using outlines={rectangle, magnification=3, size=0.1\textwidth, connect spies}]
			\node {\includegraphics[width=\factor\textwidth]{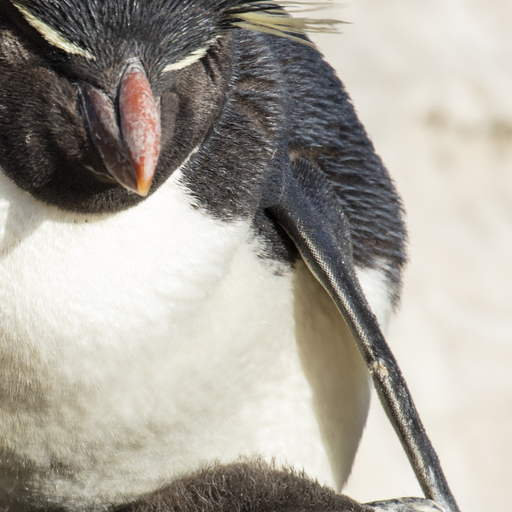}};
			\spy [red, thick] on (-1,1) in node  at (0.095\textwidth,1.35);
	\end{tikzpicture}}\hfill\subfloat[$\vg$; Out--of--Focus Blur]{\begin{tikzpicture}[spy using outlines={rectangle, magnification=3, size=0.1\textwidth, connect spies}]
			\node {\includegraphics[width=\factor\textwidth]{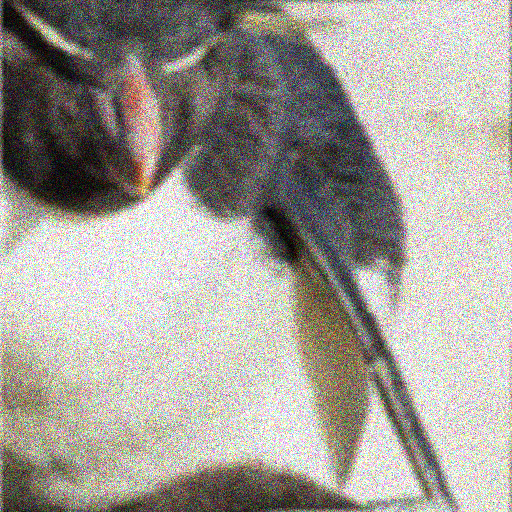}};
			\spy [red, thick] on (-1,1) in node  at (0.095\textwidth,1.35);
	\end{tikzpicture}}\hfill\subfloat[PSNR:23.52]{\begin{tikzpicture}[spy using outlines={rectangle, magnification=3, size=0.1\textwidth, connect spies}]
			\node {\includegraphics[width=\factor\textwidth]{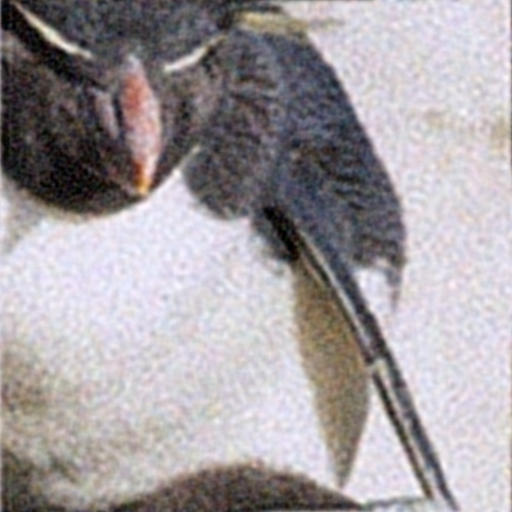}};
			\spy [red, thick] on (-1,1) in node  at (0.095\textwidth,1.35);
	\end{tikzpicture}}

	\caption{First column: clean image. Second column: perturbed data $\vg$. Third column: \gsdsplit reconstruction. The proposed method can handle different type of blurs and noise levels.\label{fig:blurs}}
\end{figure}

\section{Conclusions}
\label{sec:conc}
This work introduces \gsdsplit, an extension of the \pnpsplit splitting
scheme for Poisson image restoration. In the proposed method, the generic
plug-and-play denoiser is replaced by a Gradient Step Denoiser constructed
from a learned convex potential. This choice provides an explicit connection
between the smoothness of the potential, the relaxation parameter $\alpha$,
and the firm nonexpansiveness of the resulting denoiser.

The convergence result follows from the \pnpsplit framework whenever the
denoiser is firmly nonexpansive. In particular, this property is guaranteed
under the sufficient condition $\alpha L\leq 1$, where $L$ is the smoothness
constant of the learned potential. The numerical validation provides
consistent empirical evidence that the trained denoiser operates in an
FNE-compatible regime for $\alpha=.9$ on the considered image datasets.

The numerical experiments show that \gsdsplit provides reliable
reconstructions for Poisson image deblurring. Although \pnpsplit attains an higher reconstruction quality when both methods use their
individually selected values of the ADMM parameter $\gamma$, \gsdsplit
exhibits a lower sensitivity to its choice. Competitive reconstructions are
also obtained for some values of $\alpha$ outside the empirically supported
FNE regime, although such cases are not covered by the convergence analysis. Nonetheless, this higher robustness to the $\gamma$ parameter comes with an higher computational cost, that may be mitigated by a more tailored implementation.

Future work will focus on extending the proposed splitting strategy to RED-PRO formulations for Poisson inverse problems, including constrained models and weaker assumptions than firm nonexpansiveness for the convergence of the resulting algorithms.  A further investigation involves a possible adaptive strategy, and relative convergence analysis, for the relaxation parameter $\alpha$.

\section*{Acknowledgements} This research has been partially performed under CARIPLO Project "Data Science Approach for Carbon Farming Scenarios (DaSACaF)" \verb"CAR_RIC25ABENF_01", and under INDAM-GNCS project "Oltre il gradiente deterministico: analisi di dinamiche stocastiche per funzioni non convesse".

\section*{Usage of AI Tools.} Language editing, stylistic revision, and assistance in drafting and reviewing portions of the code were provided by OpenAI’s GPT-5.6 Thinking model. The author reviewed, tested, and approved all suggestions and remains solely responsible for the manuscript and the accompanying software.

\bibliographystyle{tfs}
\bibliography{GSD}

@InProceedings{pmlr-v70-amos17b,
	title = 	 {Input Convex Neural Networks},
	author =       {Brandon Amos and Lei Xu and J. Zico Kolter},
	booktitle = 	 {Proceedings of the 34th International Conference on Machine Learning},
	pages = 	 {146--155},
	year = 	 {2017},
	editor = 	 {Precup, Doina and Teh, Yee Whye},
	volume = 	 {70},
	series = 	 {Proceedings of Machine Learning Research},
	month = 	 {06--11 Aug},
	publisher =    {PMLR},
}

@ARTICLE{5557884,
	author={Arbeláez, Pablo and Maire, Michael and Fowlkes, Charless and Malik, Jitendra},
	journal={IEEE Transactions on Pattern Analysis and Machine Intelligence}, 
	title={Contour Detection and Hierarchical Image Segmentation}, 
	year={2011},
	volume={33},
	number={5},
	pages={898-916},
	doi={10.1109/TPAMI.2010.161}}

@InProceedings{Agustsson_2017_CVPR_Workshops,
	author = {Agustsson, Eirikur and Timofte, Radu},
	title = {NTIRE 2017 Challenge on Single Image Super-Resolution: Dataset and Study},
	booktitle = {The IEEE Conference on Computer Vision and Pattern Recognition (CVPR) Workshops},
	month = {July},
	year = {2017}
}

@article{aghabiglou2024r2d2,
	title={The {R2D2} deep neural network series paradigm for fast precision imaging in radio astronomy},
	author={Aghabiglou, Amir and Chu, Chung San and Dabbech, Arwa and Wiaux, Yves},
	journal={The Astrophysical Journal Supplement Series},
	volume={273},
	number={1},
	pages={3},
	year={2024},
	publisher={The American Astronomical Society}
}

@article{BENFENATI2026986,
	title = {Unsupervised noisy image segmentation using Deep Image Prior},
	journal = {Mathematics and Computers in Simulation},
	volume = {239},
	pages = {986-1003},
	year = {2026},
	issn = {0378-4754},
	doi = {https://doi.org/10.1016/j.matcom.2025.07.052},
	author = {Alessandro Benfenati and Ambra Catozzi and Giorgia Franchini and Federica Porta},
}

@article{benfenati2025plug,
	title={Plug and Play Splitting Techniques for {Poisson} Image Restoration},
	author={Benfenati, Alessandro},
	journal={Journal of Mathematical Imaging and Vision},
	volume={67},
	number={6},
	pages={59},
	year={2025},
	publisher={Springer}
}

@book{10.1088/2053-2563/aae109,
	author = {Bertero, Mario and Boccacci, Patrizia and Ruggiero, Valeria},
	title = {Inverse Imaging with {Poisson} Data},
	publisher = {IOP Publishing},
	year = {2018},
	series = {2053-2563},
	isbn = {978-0-7503-1437-4},
	doi = {10.1088/2053-2563/aae109},
	address = {Bristol, England, UK}
}

@book{bertero2021introduction,
	title={Introduction to inverse problems in imaging},
	author={Bertero, Mario and Boccacci, Patrizia and De Mol, Christine},
	year={2021},
	publisher={CRC press},
	address = {Boca Raton}
}

@article{booth2006power,
	title={Power iteration method for the several largest eigenvalues and eigenfunctions},
	author={Booth, Thomas E},
	journal={Nuclear science and engineering},
	volume={154},
	number={1},
	pages={48--62},
	year={2006},
	publisher={Taylor \& Francis}
}

@article{boyd2011distributed,
	title={Distributed optimization and statistical learning via the alternating direction method of multipliers},
	author={Boyd, Stephen and Parikh, Neal and Chu, Eric and Peleato, Borja and Eckstein, Jonathan and others},
	journal={Foundations and Trends{\textregistered} in Machine learning},
	volume={3},
	number={1},
	pages={1--122},
	year={2011},
	publisher={Now Publishers, Inc.},
	doi={https://doi.org/10.1561/2200000016}
}

@article{buades2011non,
	title={Non-local means denoising},
	author={Buades, Antoni and Coll, Bartomeu and Morel, Jean-Michel},
	journal={Image processing on line},
	volume={1},
	pages={208--212},
	year={2011},
	doi={1https://doi.org/10.5201/ipol.2011.bcm_nlm}
}

@Article{jimaging10020050,
	AUTHOR = {Benfenati, Alessandro and Cascarano, Pasquale},
	TITLE = {Constrained Plug-and-Play Priors for Image Restoration},
	JOURNAL = {Journal of Imaging},
	VOLUME = {10},
	YEAR = {2024},
	NUMBER = {2},
	ARTICLE-NUMBER = {50},
	PubMedID = {38392098},
	ISSN = {2313-433X},
	DOI = {10.3390/jimaging10020050}
}

@INPROCEEDINGS{9732379,
	author={Cascarano, Pasquale and Sebastiani, Andrea and Comes, Maria Colomba and Franchini, Giorgia and Porta, Federica},
	booktitle={2021 21st International Conference on Computational Science and Its Applications (ICCSA)}, 
	title={Combining Weighted Total Variation and Deep Image Prior for natural and medical image restoration via {ADMM}}, 
	year={2021},
	volume={},
	number={},
	pages={39-46},
	doi={10.1109/ICCSA54496.2021.00016}}

@article{cheng2024diffusion,
	title={A diffusion equation for improving the robustness of deep learning speckle removal model},
	author={Cheng, Li and Xing, Yuming and Li, Yao and Guo, Zhichang},
	journal={Journal of Mathematical Imaging and Vision},
	volume={66},
	number={5},
	pages={801--821},
	year={2024},
	publisher={Springer},
	doi={https://doi.org/10.1007/s10851-024-01199-6}
}

@ARTICLE{10415495,
	author={Cascarano, Pasquale and Benfenati, Alessandro and Kamilov, Ulugbek S. and Xu, Xiaojian},
	journal={IEEE Signal Processing Letters}, 
	title={Constrained Regularization by Denoising With Automatic Parameter Selection}, 
	year={2024},
	volume={31},
	number={},
	pages={556-560},
	doi={10.1109/LSP.2024.3359569}}

@article{chouzenoux2024variational,
	title={A variational approach for joint image recovery and feature extraction based on spatially varying generalised {G}aussian models},
	author={Chouzenoux, Emilie and Corbineau, Marie-Caroline and Pesquet, Jean-Christophe and Scrivanti, Gabriele},
	journal={Journal of Mathematical Imaging and Vision},
	volume={66},
	number={4},
	pages={550--571},
	year={2024},
	publisher={Springer},
	doi={https://doi.org/10.1007/s10851-024-01184-z}
}

@article{cohen2021regularization,
	title={Regularization by denoising via fixed-point projection ({RED-PRO})},
	author={Cohen, Regev and Elad, Michael and Milanfar, Peyman},
	journal={SIAM Journal on Imaging Sciences},
	volume={14},
	number={3},
	pages={1374--1406},
	year={2021},
	publisher={SIAM},
	doi={https://doi.org/10.1137/20M1337168}
}

@article{dabov2007image,
	title={Image denoising by sparse 3-{D} transform-domain collaborative filtering},
	author={Dabov, Kostadin and Foi, Alessandro and Katkovnik, Vladimir and Egiazarian, Karen},
	journal={IEEE Transactions on image processing},
	volume={16},
	number={8},
	pages={2080--2095},
	year={2007},
	publisher={IEEE},
	doi={10.1109/TIP.2007.901238}
}

@article{daniele2026deep,
	title={Deep equilibrium models for poisson imaging inverse problems via mirror descent},
	author={Daniele, Christian and Villa, Silvia and Vaiter, Samuel and Calatroni, Luca},
	journal={SIAM Journal on Imaging Sciences},
	volume={19},
	number={2},
	pages={1077--1109},
	year={2026},
	publisher={SIAM}
}

@article{ducotterd2024improving,
	title={Improving {L}ipschitz-constrained neural networks by learning activation functions},
	author={Ducotterd, Stanislas and Goujon, Alexis and Bohra, Pakshal and Perdios, Dimitris and Neumayer, Sebastian and Unser, Michael},
	journal={Journal of Machine Learning Research},
	volume={25},
	number={65},
	pages={1--30},
	year={2024}
}

@INPROCEEDINGS{9616253,
	author={Dutta, Sayantan and Basarab, Adrian and Georgeot, Bertrand and Kouamé, Denis},
	booktitle={2021 29th European Signal Processing Conference (EUSIPCO)}, 
	title={{Poisson} Image Deconvolution by a Plug-and-Play Quantum Denoising Scheme}, 
	year={2021},
	volume={},
	number={},
	pages={646-650},
	doi={10.23919/EUSIPCO54536.2021.9616253}}

@ARTICLE{8901171,
	author={de Haan, Kevin and Rivenson, Yair and Wu, Yichen and Ozcan, Aydogan},
	journal={Proceedings of the IEEE}, 
	title={Deep-Learning-Based Image Reconstruction and Enhancement in Optical Microscopy}, 
	year={2020},
	volume={108},
	number={1},
	pages={30-50},
	doi={10.1109/JPROC.2019.2949575}}

@inproceedings{
	hurault2022gradient,
	title={Gradient Step Denoiser for convergent Plug-and-Play},
	author={Samuel Hurault and Arthur Leclaire and Nicolas Papadakis},
	booktitle={International Conference on Learning Representations},
	year={2022},
}

@article{klatzer2026efficient,
	title={Efficient {Bayesian} computation using plug-and-play priors for {Poisson} inverse problems},
	author={Klatzer, Teresa and Melidonis, Savvas and Pereyra, Marcelo and Zygalakis, Konstantinos C},
	journal={SIAM Journal on Imaging Sciences},
	volume={19},
	number={2},
	pages={1325--1363},
	year={2026},
	publisher={SIAM}
}

@article{Loreto02012025,
	author = {M. Loreto and T. Humphries and C. Raghavan and K. Wu and S. Kwak},
	title = {A new spectral conjugate subgradient method with application in computed tomography image reconstruction},
	journal = {Optimization Methods and Software},
	volume = {40},
	number = {1},
	pages = {72--95},
	year = {2025},
	publisher = {Taylor \& Francis},
	doi = {10.1080/10556788.2024.2372668},
}

@article{figueiredo2010restoration,
	title={Restoration of {Poisson}ian images using alternating direction optimization},
	author={Figueiredo, M{\'a}rio AT and Bioucas-Dias, Jos{\'e} M},
	journal={IEEE transactions on Image Processing},
	volume={19},
	number={12},
	pages={3133--3145},
	year={2010},
	publisher={IEEE},  doi={10.1109/TIP.2010.2053941}
}

@article{Gao03032020,
	author = {Wenbo Gao and Donald Goldfarb and Frank E. Curtis},
	title = {{ADMM} for multiaffine constrained optimization},
	journal = {Optimization Methods and Software},
	volume = {35},
	number = {2},
	pages = {257--303},
	year = {2020},
	publisher = {Taylor \& Francis},
	doi = {10.1080/10556788.2019.1683553},
}

@article{Setzer2012,
	title = {Deblurring {Poisson}ian images by split {Bregman} techniques},
	journal = {Journal of Visual Communication and Image Representation},
	volume = {21},
	number = {3},
	pages = {193-199},
	year = {2010},
	issn = {1047-3203},
	doi = {https://doi.org/10.1016/j.jvcir.2009.10.006},
	author = {S. Setzer and G. Steidl and T. Teuber},
}

@article{hurault2023convergent,
	title={Convergent {B}regman plug-and-play image restoration for {P}oisson inverse problems},
	author={Hurault, Samuel and Kamilov, Ulugbek and Leclaire, Arthur and Papadakis, Nicolas},
	journal={Advances in Neural Information Processing Systems},
	volume={36},
	pages={27251--27280},
	year={2023}
}

@article{martini2025measuring,
	title={Measuring objective image and video quality: on the relationship between SSIM and PSNR for DCT-based compressed images},
	author={Martini, Maria G},
	journal={IEEE Transactions on Instrumentation and Measurement},
	volume={74},
	pages={1--13},
	year={2025},
	publisher={IEEE}
}

@article{morotti2026adaptive,
	title={Adaptive weighted total variation boosted by learning techniques in few-view tomographic imaging},
	author={Morotti, Elena and Evangelista, Davide and Sebastiani, Andrea and Piccolomini, Elena Loli},
	journal={Journal of Scientific Computing},
	volume={106},
	number={3},
	pages={74},
	year={2026},
	publisher={Springer}
}

@ARTICLE{11299501,
	author={Suzuki, Yodai and Isono, Ryosuke and Ono, Shunsuke},
	journal={IEEE Transactions on Computational Imaging}, 
	title={Convergent Primal-Dual Plug-and-Play Image Restoration: A General Algorithm and Applications}, 
	year={2026},
	volume={12},
	number={},
	pages={142-157},
	doi={10.1109/TCI.2025.3644248}}

@ARTICLE{7936537,
	author={Ono, Shunsuke},
	journal={IEEE Signal Processing Letters}, 
	title={Primal-Dual Plug-and-Play Image Restoration}, 
	year={2017},
	volume={24},
	number={8},
	pages={1108-1112},
	doi={10.1109/LSP.2017.2710233}}

@article{doi:10.1137/20M1387961,
	author = {Pesquet, Jean-Christophe and Repetti, Audrey and Terris, Matthieu and Wiaux, Yves},
	title = {Learning Maximally Monotone Operators for Image Recovery},
	journal = {SIAM Journal on Imaging Sciences},
	volume = {14},
	number = {3},
	pages = {1206-1237},
	year = {2021},
	doi = {10.1137/20M1387961}
}

@article{romano2017little,
	title={The little engine that could: Regularization by denoising ({RED})},
	author={Romano, Yaniv and Elad, Michael and Milanfar, Peyman},
	journal={SIAM journal on imaging sciences},
	volume={10},
	number={4},
	pages={1804--1844},
	year={2017},
	publisher={SIAM},doi={https://doi.org/10.1137/16M1102884}
}

@article{ROND201696,
	title = {{Poisson} inverse problems by the Plug-and-Play scheme},
	journal = {Journal of Visual Communication and Image Representation},
	volume = {41},
	pages = {96-108},
	year = {2016},
	issn = {1047-3203},
	doi = {https://doi.org/10.1016/j.jvcir.2016.09.009},
	url = {https://www.sciencedirect.com/science/article/pii/S1047320316301985},
	author = {Arie Rond and Raja Giryes and Michael Elad},
}

@article{tachella2025deepinverse,
	title = {DeepInverse: A Python package for solving imaging inverse problems with deep learning},
	journal = {Journal of Open Source Software},
	doi = {10.21105/joss.08923},
	year = {2025},
	publisher = {The Open Journal},
	volume = {10},
	number = {115},
	pages = {8923},
	author = {Tachella, Julián and Terris, Matthieu and Hurault, Samuel and Wang, Andrew and Davy, Leo and Scanvic, Jérémy and Sechaud, Victor and Vo, Romain and Moreau, Thomas and Davies, Thomas and Chen, Dongdong and Laurent, Nils and Monroy, Brayan and Dong, Jonathan and Hu, Zhiyuan and Nguyen, Minh-Hai and Sarron, Florian and Weiss, Pierre and Escande, Paul and Massias, Mathurin and Modrzyk, Thibaut and Levac, Brett and Liaudat, Tobías I. and Song, Maxime and Hertrich, Johannes and Neumayer, Sebastian and Schramm, Georg},
}

@INPROCEEDINGS{6737048,
	author={Venkatakrishnan, Singanallur V. and Bouman, Charles A. and Wohlberg, Brendt},
	booktitle={2013 IEEE Global Conference on Signal and Information Processing}, 
	title={Plug-and-Play priors for model based reconstruction}, 
	year={2013},
	volume={},
	number={},
	pages={945-948},
	doi={10.1109/GlobalSIP.2013.6737048}}

@article{zanella2013towards,
	title={Towards real-time image deconvolution: application to confocal and {STED} microscopy},
	author={Zanella, Riccardo and Zanghirati, Gaetano and Cavicchioli, ROBERTO and Zanni, Luca and Boccacci, Patrizia and Bertero, Mario and Vicidomini, Giuseppe},
	journal={Scientific reports},
	volume={3},
	number={1},
	pages={2523},
	year={2013},
	publisher={Nature Publishing Group UK London}
}

@article{zhang2026rethinking,
	title={Rethinking Gradient Step Denoiser: Towards Truly Pseudo-Contractive Operator},
	author={Zhang, Shuchang and Zeng, Yaoyun and Deng, Kangkang and Wang, Hongxia},
	journal={Advances in Neural Information Processing Systems},
	volume={38},
	pages={29585--29612},
	year={2026}
}

@article{Wang2004,
	author = {Zhou Wang and Alan C. Bovik and Hamid R. Sheikh and Eero P. Simoncelli},
	title = {Image quality assessment: From error visibility to structural similarity},
	journal = {IEEE Transactions on Image Processing},
	year = {2004},
	volume = {13},
	number = {4},
	pages = {600--612},
	doi = {10.1109/TIP.2003.819861}
}

@article{zunino2023reconstructing,
	title={Reconstructing the image scanning microscopy dataset: an inverse problem},
	author={Zunino, Alessandro and Castello, Marco and Vicidomini, Giuseppe},
	journal={Inverse Problems},
	volume={39},
	number={6},
	pages={064004},
	year={2023},
	publisher={IOP Publishing},
	doi={10.1088/1361-6420/accdc5}
}

@ARTICLE{7839189,
	author={Zhang, Kai and Zuo, Wangmeng and Chen, Yunjin and Meng, Deyu and Zhang, Lei},
	journal={IEEE Transactions on Image Processing}, 
	title={Beyond a {G}aussian Denoiser: Residual Learning of Deep {CNN} for Image Denoising}, 
	year={2017},
	volume={26},
	number={7},
	pages={3142-3155},
	doi={10.1109/TIP.2017.2662206}}

\end{document}